\theoremstyle{plain}
\newtheorem{theorem}{Theorem}[section]
	\newenvironment{customthm}[1]
	  {\innercustomthm}
	  {\endinnercustomthm}
\newtheorem{lemma}[theorem]{Lemma}
\newtheorem{proposition}[theorem]{Proposition}
\newtheorem{corollary}[theorem]{Corollary}
\theoremstyle{definition}
\newtheorem{definition}[theorem]{Definition}
\newtheorem{remark}[theorem]{Remark}
\global\let\c@equation=\c@theorem}
\newcommand{\calS}{{\mathcal S}}
\newcommand{\IR}{{\mathbb R}} 
\newcommand{\IC}{{\mathbb C}} 
\newcommand{\IH}{{\mathbb H}} 
\newcommand{\IO}{{\mathbb O}} 
\newcommand{\ImO}{{\rm{ Im }\, \IO}} 
\newcommand{\IF}{{\mathbb F}} 
\newcommand{\IZ}{{\mathbb Z}} 
\newcommand{\ind}{\operatorname{Index}} 
\newcommand{\id}{\operatorname{id}} 
\newcommand{\CP}[1]{\mathbb{CP}^{#1}} 
\newcommand{\Gr}[2]{ G_{#1}( \mathbb{R}^{#2}) } 
\newcommand{\OGr}[2]{ \widetilde{G}_{#1}( \mathbb{R}^{#2}) } 
\newcommand{\CBorel}[2]{ E#1 \times_{#1} #2} 
\newcommand{\Index}[4]{\ind_{#3}^{#2}(#1;#4) } 
\newcommand{\Map}[3]{ #1 \colon #2 \longrightarrow #3  } 
\newcommand{\bigC}[1]{ \big(  #1 \big) } 
\newcommand{\BigC}[1]{ \Big(  #1 \Big) } 
\newcommand{\free}[1]{\langle #1 \rangle} 
\newcommand{\xySquare}[9] {	 
	\xymatrix@C=#9mm{
		#1 \ar[r]^{#2} \ar[d]_{#4} & #3 \ar[d]^{#5}  \\
		#6\ar[r]^{#7} & #8 }
}
\newcommand{\xyTriangle}[6] {	 
	\xymatrix{
		#1 \ar[rr]^{#2} \ar[dr]_{#4} & & #3 \ar[dl]^{#5}  \\
		& #6 & }
}
\newcommand{\FiberBundle}[4] {	 
	\xymatrix{
		#1 \; \ar@{^{(}->}[r] & #2 \ar[d]^{#3}  \\
		& #4 }
}
\newcommand{\version}[1]{ 
	\begin{center} 
		last edited on #1\\
		last compiled on \today\\
		name of texfile: \jobname
	\end{center}
}
\title[The index of fibrations with total space a $G_{2}$ flag manifold]
{The ideal-valued index of fibrations with total space a $G_{2}$ flag manifold}
\author{No\'{e} B\'{a}rcenas}
\address{
	\hfill\break Centro de Ciencias Matem\'aticas. UNAM, Ap. Postal 61-3 Xangari \\ 
	\hfill\break Morelia, Michoac\'an, México C.P. 58089 
}
\email{barcenas@matmor.unam.mx}
\author{Jaime Calles Loperena}
\address{
	\hfill\break Instituto de Matem\'aticas, Universidad Nacional Aut\'onoma de M\'exico,  C.U.  \\
	\hfill\break CDMX 04510, Mexico.
}
\email{calles@im.unam.mx}
\begin{document}


\date{\today}
\maketitle


\begin{abstract} 

Using the cohomology of the $G_2$-flag manifolds $G_2/U(2)_{\pm}$, and their structure as a fiber bundle over the homogeneous space $G_2/SO(4)$, we compute the $\IZ_2$ Fadell-Husseini index of such fiber bundles for the 
$\IZ_2$ action given by complex conjugation.

\medskip
Also, considering the tautological bundle $\gamma$ over $\OGr{4}{7}$, we compute the $\IZ_2$ Fadell-Husseini index of the pullback bundle of $s\gamma$ along the composition of the fiber bundle $  G_2/U(2)_{\pm} \to G_2/SO(4)$, the embedding between $G_2/SO(4)$ and $\OGr{3}{7}$, and the map that takes the orthogonal complement of a subspace.
Here $s\gamma$ means the associated sphere bundle of $\gamma$. Furthermore, we derive a general formula for the $n$-fold product bundle $s\gamma^n$ for which we make the same computations. 
We finish our work with an application of our computations in a problem concerning discrete geometry.

\medskip	
\emph{\bf Keywords and phrases:}
Fadell-Husseini index,  Fiber bundle, $G_2$ flag manifold, Serre spectral sequence.  

\medskip
\emph{\bf 2010 Mathematics Subject Classification:} 
(Primary) 55N91 ; (Secondary) 55T10, 57R20, 14M15.

\end{abstract}

\section{ \Large Introduction.}
\label{Introduction}

A \emph{generalized flag manifold} is an homogeneous space of the form $G/C(T)$, where $G$ is a semisimple, compact and connected Lie group, and $C(T)$ is the centralizer of a torus $T \subset G$. In case that $T$ is the maximal torus, then $T = C(T)$ and we call $G/T$ a \emph{complete flag manifold}.

The group in which we want to focus is the exceptional Lie group $G_2$, which is the automorphism group of the $\IR$-algebra homomorphisms of the octonions $\mathbb{O}$. From all the possible $G_2$ flag manifolds, we are particularly interested in the spaces $G_2/U(1) \times U(1) $ and $G_2/U(2)_{\pm}$. In the following diagram of fiber bundles we appreciate how they are related:
\begin{equation}
\label{Diagram: fiber_bundles}
	\xymatrix{
		&\ar[dl]_{\rho_{4}} G_2/U(1) \times U(1) \ar[dd]^{\rho_{3}}  \ar[dr]^{\rho_{5}}  &   	\\
		G_2/U(2)_+\ar[dr]_{\rho_{1}}&   								 						&  G_2/U(2)_-  \ar[dl]^{\rho_{2}}  	\\
			 									 & 		 G_2/SO(4) .	 									&																						
	}
\end{equation}
In fact, those flag manifolds are precisely the only three twistor spaces of the homogeneous space $ G_2/SO(4)$, see 
\cite[Sec. 2.3]{svensonwood}. 

\medskip
Previous to this work, several authors studied the integral cohomology of all the homogeneous spaces 
appearing in diagram \ref{Diagram: fiber_bundles}. In sections \ref{sec: G2 Flag Manifolds} and \ref{sec: Calculations F_HIndex} we calculate their cohomology with $\IF_2$ coefficients, and the Stiefel-Whitney classes of the fiber bundles $\rho_1$ and $\rho_2$.
Moreover, because of these calculations, in section \ref{sec: Calculations F_HIndex} we show that the cohomology rings with $\IF_2$ coefficients of $G_{2}/U(2)_{+}$ and $G_{2}/U(2)_{-}$ are isomorphic. For that reason we will not distinguish between them and we will just write $G_{2}/U(2)_{\pm}$.

Considering the action of $\IZ_2$ on $G_{2}/U(2)_{\pm}$ by complex conjugation, in section \ref{subsec: FadellHusseini rho1 and rho2} we prove our first main result.

\begin{customthm}
{\ref{thm: BorelCohom G2/U2 & FH rho1&2}} 
\label{MainResult 1}
	The Fadell-Husseini index of $\rho_1$ and $\rho_2$ is given by
	\[
		\Index{ \rho_1 }{G_2/SO(4)}{\IZ_2}{\IF_2} =	\Index{ \rho_2 }{G_2/SO(4)}{\IZ_2}{\IF_2} =	\free{  t^3 + u_2 t + u_3  },
	\]
	where $H^*(B\IZ_2; \IF_2 ) = \IF_2[t]$ with $\deg(t) = 1$. Consequently, the Borel cohomology of  $ G_{2}/U(2)_{\pm} $ 
	is given by 
	\[ 
		H^*( \CBorel{\IZ_2}{G_2/U(2)_{\pm}} ; \IF_2 ) = 	
					H^*( B\IZ_2 \times  G_2/SO(4) ; \IF_2 )  \Big/ \free{t^3 + u_2 t + u_3}. 
	\]
\end{customthm}

On the other hand, let us consider the tautological bundle over the oriented Grassmann manifold $\OGr{k}{n}$: 
\[
	\gamma_k^d = \big( E(\gamma_k^d ), \; \OGr{k}{d}, \;  E(\gamma_k^d ) \xrightarrow{\pi} \OGr{k}{d}, \; \IR^k \big),
\]
and the $n$-fold product associated to sphere bundle $s\gamma_k^d$: 
\[
	(s\gamma_k^d)^n = \big( E(s\gamma_k^d )^n , \; {\OGr{k}{d}}^n, \;  E(s\gamma_k^d )^n \xrightarrow{ (s\pi)^n } {\OGr{k}{d}}^n, 			\; (S^{k-1})^n \big).
\]
Given the pullback bundle of $(s\gamma_k^d)^n$ along the diagonal map $\Delta_n$:
\[
	\xymatrix{
	E\BigC{ 
	\Delta_n^* \bigC{(s\gamma_k^d)^n } } 
	\ar[r] \ar[d]&	E\bigC{s\gamma}^n \ar[d]^{(s\pi)^n} \\
	\OGr{k}{d}	\ar[r]^{\Delta_n} &		\OGr{k}{d}^n,}
\]   
the total space of $\Delta_n^* \bigC{(s\gamma_k^d)^n } $ has been an excellent candidate to be the configuration spaces of several geometric problems which uses the \emph{configuration space/test map scheme}. This means that, the more we know about the pullback bundle $\Delta_n^* \bigC{(s\gamma_k^d)^n }$, the better chance we have of solving any related problem. 

\medskip
Motivated by this situation, and by some calculations over Grassmann manifolds presented in \cite{Barali2018} and \cite{basu2020cohomology}, in this work we will study the following situation: 
Let us consider the sphere bundle of the $n$-fold product $\gamma^n$, where $\gamma$ represents the tautological bundle over $\OGr{4}{7}$
\[
	s\gamma = \big( E(s\gamma), \;  \OGr{4}{7},  \; 
			E(s\gamma) \xrightarrow{s\pi} \OGr{4}{7}, \;  S^3 \big).
\] 
Since there is an embedding $i$ between $G_2/SO(4)$ and $\OGr{3}{7}$, we consider the map $f = c \circ i$, where $c \colon \OGr{3}{7} \to \OGr{4}{7}$ assign to every linear subspace its orthogonal complement. 
We are interested on the pullback bundle of $(s\gamma)^n$ along the map $\Delta_n \circ f \circ \rho_j$, where $\Delta_n: \OGr{4}{7} \to \OGr{4}{7}^n$ is the diagonal map:
\[ 
	\zeta_n = \big( E(\zeta_n) = \calS_{\gamma}^{n}, \; G_2/U(2)_{\pm},  \; 
			\calS_{\gamma}^{n} \xrightarrow{\phi_n} G_2/U(2)_{\pm}, \; (S^3)^n \big).
\]
An example of this kind of constructions appears in \cite{basu2020cohomology}. The  total space $\calS_{\gamma}^n$ considers now collections of unit vectors inside $4$-dimensional subspaces of $\IR^7$, which are exactly the orthogonal complement of spaces which we usually called \emph{associative subspaces}. The specific details about the construction of  $\calS_{\gamma}^n$, as well as some topological properties of the bundle $\zeta_n$,  are discussed further in section \ref{subsec: FadellHusseini pullback}. We will prove then the following results:

\begin{customthm}{\ref{thm: FH Pullback}} 
\label{MainResult 3}
	Consider the action of $\IZ_2^{n}$ on $S_{\gamma^{\perp}}^{n}$ where the each summand in $\IZ_2^{n}$ acts antipodally on 
	a unit vector. Then the Fadell-Husseini index of $\Map{\phi_{n}}{ \calS_{\gamma}^{n} }{ G_2/U(2)_{\pm} } $ is given by
	\[
		\Index{ \phi_{n} }{G_2/U(2)_{\pm}}{\IZ_2^{n}}{\IF_2} 
			= \free{ y^2 + y t_1^2 + t_1^4, \ldots , y^2 + y t_{n}^2 + t_{n}^4 }.
	\]
	where $H^*(\IZ_2^{n}; \IF_2 ) = \IF_2[t_1, \ldots, t_{n}]$ with $\deg(t_1) = \cdots = \deg(t_{n}) = 1$.
\end{customthm}

At the end of the work we present an application of our computations to a mass partition problem, a classical problem in discrete geometry.  

\subsection*{\Large Acknowledgements}
The  first  author thanks  Manuel Sedano for enlightening conversations concerning homogeneous spaces and their almost  complex structures, and Gregor Weingart about the cohomology and characteristic classes of $G_{2}$-homogeneous manifolds. The first  author  thanks  the  support  by  PAPIIT  Grant IA 100119 and  IN 100221, as  well  as a  Sabbatical  Fellowship by DGAPA-UNAM for  a  stay  at the  University  of  the  Saarland and  the  SFB TRR 195 Symbolic Tools in Mathematics and their Application.  

\medskip
The second author was supported by the UNAM Posdoctoral Scholarship Program (DGAPA). The authors thank the support of CONACYT trough grant CF-2019 217392 and Hood Chatham for his support concerning the latex package 
\emph{ spectralsequences}.


\section{ \Large The exceptional Lie group $G_{2}$.}
\label{sec: The exceptional Lie group G2}

In this section we will recall three equivalent definitions of the  real  form of the exceptional Lie group $G_{2}$. Let us fix now the notation. General references for the upcoming discussion include \cite{baez} and \cite{draymanogue}. 

\subsection{Octonions algebra}
\label{subsec: octonions}

Given a normed division algebra $A$, the Cayley-Dickinson construction creates a new algebra $A^{'}$ with elements 
$(a,b) \in A^2$ and conjugation $(a,b)^{*}= (\bar{a}, -b)$ . The addition in $A^{'}$ is done component-wise, and multiplication goes like
\[ (a,b)(c,d) = ( ac-d \bar{b} , \bar{a}d + cb ), \]
where juxtaposition indicates multiplication in $A$. An equivalent way to define the new algebra $A^{'}$ is to add an independent square root of $-1$, $i$, that multiplies the second named element on each pair $(a,b)$. Now the conjugation in $A^{'}$ uses the original conjugation of $A$ and $i^{*} = -i$. Then the construction becomes an algebra of elements $a+ib$ for some $a,b \in A$.

\medskip
Starting with $\IR$, the complex numbers are defined via the Cayley-Dickinson construction to be elements 
$a+ib$, with $a$ and $b$ real numbers. Similarly, the quaternions are generated as a real algebra by $\{1, i, j, k \}$, subject to the relations $i^{2}=j^{2}= k^{2}= ijk=-1$. Having this in mind, we get the following definition

\begin{definition}
\label{def:octonions}
	The octonions are generated as a quaternionic algebra via the Cayley-Dickinson construction as 
	$\IH \oplus  \IH[l]$,  where $l$ denotes an independent square root of $-1$. 
\end{definition}

Even if the Cayley-Dickinson construction is an excellent method to produce other normed division algebras, we lost very nice properties in the process. The multiplication in $\IO$ turns out to be non-commutative and non-associative. As a real vector space, $\IO$ is generated by $\{ 1, i,j,k,l, li, lj, lk \} = \{1, e_{1}, \ldots, e_{7} \}$,  where $e_{1}, e_{2}, \ldots e_{7}$ are  imaginary units, which square to $-1$,  switch sign under complex conjugation and anticommute. They span the seven dimensional real subspace which we will denote as the purely imaginary part of the octonions, $\ImO$. 
 
\medskip
Denote by $\cdot$ the product furnishing the octonions with the structure of real division algebra. 
The product $\cdot$ determines a cross product in $\IO$, expressed as
\[ 
	x \times y:= \frac{1}{2} (x\cdot y - y\cdot x),
\] 
which contains all the non-trivial algebraic information about the octonions, and turns $\ImO$ into an algebra. In case that $x$ and $y$ are imaginary cotonions, the relation $x \cdot y  + (x,y)= x \times y$ holds, where $(x,y)$ represents the scalar product. As will be discussed below,  the algebra $(\ImO, \times)$ provides a new definition of the group $G_{2}$.

\subsection{The exceptional Lie group $G_{2}$: three equivalent definitions}
\label{subsec: G2}

Now that we are familiar with the $\IR$-algebra $\IO$, we are ready to introduce the main ingredient of this work.

\begin{definition} 
\label{def:g2}
	The exceptional Lie group $G_{2}$ is defined to be the group of all $\IR$-algebra automorphisms of 
	the octonions. 
\end{definition}

We will consider an alternative but equivalent definition of $G_2$: Given two orthogonal and unit imaginary octonions $x$ and $y$, the cross product $x \times y$  is orthogonal to both of them, and the subalgebra generated by $\{1, x, y, x \times y  \}$ is isomorphic to $\IH$. 
If we consider another unit imaginary octonion $z$, orthogonal to the subspace generated by $\{1, x, y, x \times  y\} \cong \IH$,  then the subalgebra over the quaternions $\IH \oplus \IH[z]$ is isomorphic to the octonions. This has the consequence that an element $g \in G_{2}$ can be characterized by prescribing three unit imaginary  octonions $\{x, y, z\}$, with $z$ orthogonal to 
$x$, $y$ and $x\times y$. The element $g$ is then the unique automorphism of the imaginary part of the octonions, which sends $\{ x,y,z\}$ to $\{i,j,l\}$. Then the exceptional Lie group $G_{2}$ is defined to be the automorphism group of the algebra 
$({\rm Im }\, \IO, \times) $.

\medskip
On $\ImO$ one can define a cross product three-form on the generators $e_{i}, e_{j}, e_{k}$ as 
\[
	\phi(e_{i}, e_{j}, 	e_{k}) = f^{ijk} ,
\] 
where $e_{i} \times e_{j} = \underset{k}{\sum }f^{ijk} e_{k}$. If we consider the dual basis $\{w^{i} :=e_{i}^{*}\}$ of the generators 
$\{e_{i}\}$ given above, then the three-form $\phi$ is given by  
\[ 
	\phi= w^{123} - w^{145} - w^{167} - w^{246} + w^{257} - w^{347} - w^{356},
\]
where the notation $w^{ijk}$ denotes the wedge product $e^{i} \wedge e^{j} \wedge e^{j}$. Notice that $\phi$ encodes the multiplicative structure of the cross product in the imaginary part of the octonions. Then, using the above considerations, we obtain a third equivalent definition of the exceptional Lie group $G_{2}$ as follows:
\[ 
	G_2 = \{ g \in GL(\ImO) \mid g^{*} \phi = \phi \}. 
\]

\medskip
The following result summarizes the previous discussion about the real form of $G_2$, besides some of its properties presented by Bryant in \cite[Theorem 1]{bryant}.

\begin{theorem}
\label{theorem:g2isomorphic}
	The real form of the group $G_2$ has dimension $14$. It is simple, simply connected, and is isomorphic to one and hence to 	
	all of the following Lie groups: 
	\begin{enumerate}
		\item The group of all $\IR$-algebra automorphisms of the octonions.
		\item The automorphism group of the subalgebra $(\ImO, \times)$. 
		\item The subgroup of $SO(7)$ which preserves the cross product three form. 
	\end{enumerate}
\end{theorem}


\section{ \Large $G_{2}$ flag  manifolds}
\label{sec: G2 Flag Manifolds}

The flag manifolds on $G_2$ have received recently a lot of attention from several viewpoints. From riemannian geometry 
(\cite{bryant}, \cite{svensonwood}), algebraic topology relevant to global analysis \cite{akbulutkalafat}, complex and K\"ahler geometry (\cite{kotschickthung}, \cite{semmelmannweingart}), and from the  classical  computations  of  their  characteristic  classes  via  representation theory by Borel and Hirzebruch (\cite{borelhirzebruch}, \cite{gramanegreiros}). The combination of the above facts, excellently produced in \cite{kotschickthung}, aroused our interest in the subject and gave us the idea to calculate the Fadell-Husseini index of some fiber bundles over $G_2/SO(4)$ with total space a $G_2$-flag manifold. 

\medskip
In this section we introduce some flag manifolds associated to the exceptional Lie group $G_2$. Since we are particularly interested on the $G_2$-flag manifolds which fiber over $G_2/SO(4)$, we are going to stat introducing the 
$G_2$-homogeneous space $G_2/SO(4)$ in order to define the fiber bundles for which we want to calculate their Fadell-Husseini indices.

\subsection{The space of associative subspaces  $G_{2}/SO(4)$. } 
\label{subsec: G2/SO(4)}

Now that we have introduced the exceptional Lie group $G_2$ with some equivalent definitions, we are ready to study the $G_2$-homogeneous space that we are going to use. 

\begin{definition}
\label{def:associative}
	A $3$-dimensional subspace $\xi \subset \ImO$ is said to be \emph{associative} if it is the imaginary part of a subalgebra 
	isomorphic to the 	quaternions. 
\end{definition}

Notice that the subspace $\xi $ acquires a canonical orientation from that of the quaternions. On the other hand, in terms of the three-form defined in \ref{subsec: G2}, we can also define an associative subspace as the $3$-dimensional real subspace of $\IR^{7} \cong_{\IR} \ImO$ in which the three form $\phi$ agrees with the volume form ${\rm Vol}(\xi)$. 

\medskip
The group $G_2$ stabilizes $1$, and since the scalar product is the real part of the octonionic multiplication $\cdot$, it acts by isometries on $\ImO$. Since also preserves the vector product, it preserves orientation so that
\[ 
	G_2 \subset SO(\ImO) \cong SO(7). 
\]
Every associative subspace $\xi$ admits an orthonormal basis $\{ e_1,e_2,e_3 \}$ with $e_1 \times e_2 = e_3$. Then, given such a triple $\{ e_1,e_2,e_3 \}$, there exists a unique automorphism in $\ImO$ taking the ordered triple $\{i,j,l\}$ to it. 
This induces  a transitive action of $G_2$ on the Grassmannian of associative $3$-dimensional subspaces of $\ImO \cong \IR^7$, with stabilizer $SO(4)$. Then $G_2/SO(4)$ is the set of all associative $3$-dimensional subspaces of $\ImO \cong \IR^7$. Finally, since every associative subspace has a canonical orientation, there is an embedding of $G_2/SO(4)$ in the $3$-dimensional oriented Grassmannian manifold $\OGr{3}{7}$, with $\IR^7 \cong \ImO$.

\medskip
Borel and Hirzebruch studied in \cite{borelhirzebruch} the cohomology with $\IF_2$-coefficients and the Stiefel-Whitney classes of $G_2/SO(4)$. They proved the following result:

\begin{proposition}
\label{prop: Borel_Hirzebruch G2/SO(4)}
	The homogeneous space of associative $3$-dimensional real subspaces, denoted by $G_{2}/SO_4$,  is an $8$-dimensional  				manifold for which $H^*( G_2/SO(4) ;\IF_2)$ is generated by two elements $u_2, u_3$ of degrees $2$ and $3$ respectively, with 
	the relations 
	\[ 
		u_2^3 = u_3^2 \quad \text{ and } \quad u_3 u_2^2 = 0.
	\]  
	Also, the Stiefel-Whitney classes of $G_2/SO(4)$ are non-zero only in dimensions $0,4,6$ and $8$.
\end{proposition}

On the other hand, Shi and Zhou in \cite[Section 10]{shi2014characteristic}, Thung in \cite[Section 2]{kotschickthung}, and Akbulut and Kalafat \cite[Theorem 10.3]{akbulutkalafat}, studied the integral cohomology of $G_2/SO(4)$, and the relations between the corresponding generators to write it as a truncated polynomial algebra.

\begin{proposition}
\label{prop:cohomSo4}
	 The integral cohomology of $G_2/SO(4)$ is a truncated polynomial algebra generated by two elements $a$ and $b$, of degree 
	 $3$ and $4$ respectively, subject to the relations 
	\[ 
		\{ 2b=0,\; b^{3}=0, \; a^{3}=0, \; ab=0  \}. 
	\] 
\end{proposition}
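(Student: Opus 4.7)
The plan is to combine the mod-$2$ cohomology of Proposition \ref{prop: Borel_Hirzebruch G2/SO(4) } with the rational Poincar\'e polynomial, and then to lift the result to integer coefficients via the Bockstein spectral sequence.

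Since $G_{2}$ and $SO(4)$ both have rank $2$, Hirzebruch's formula gives $\chi(G_{2}/SO(4)) = |W(G_{2})|/|W(SO(4))| = 12/4 = 3$, and the Cartan model for the homogeneous space identifies $H^{*}(G_{2}/SO(4); \IQ) \cong \IQ[x]/(x^{3})$ with $x$ of degree $4$. Comparing the resulting rational Poincar\'e polynomial $1 + t^{4} + t^{8}$ with the mod-$2$ polynomial $1 + t^{2} + t^{3} + t^{4} + t^{5} + t^{6} + t^{8}$ read off from Proposition \ref{prop: Borel_Hirzebruch G2/SO(4) }, the discrepancy factors as $(1+t)(t^{2}+t^{5})$. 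By the universal coefficient theorem this accounts for exactly two $\IZ/2$-summands in the integral cohomology, necessarily located in degrees $3$ and $6$.

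To confirm this at the level of the ring, I would run the integral Bockstein on the mod-$2$ cohomology. Forcing $H^{2}(G_{2}/SO(4); \IZ) = 0$ from the rational and torsion information above, the Bockstein $\beta$ is injective on $H^{2}(\,\cdot\,; \IF_{2})$, so $Sq^{1}(u_{2}) = u_{3}$; the Cartan formula combined with $Sq^{1}(u_{3}) = 0$ then gives $Sq^{1}(u_{2} u_{3}) = u_{3}^{2} = u_{2}^{3}$. Hence on the $E_{1}$-page of the Bockstein spectral sequence the classes cancel in pairs $\{u_{2}, u_{3}\}$ and $\{u_{2} u_{3}, u_{2}^{3}\}$, leaving only $1$, $u_{2}^{2}$ and $u_{2}^{4}$, in agreement with the rational cohomology.

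To extract the integral ring structure, write $a$ and $b$ for integral lifts in degrees $3$ and $4$. The relations $a^{3} = 0$, $b^{3} = 0$ and $ab = 0$ are immediate from the vanishing of $H^{9}$, $H^{12}$ and $H^{7}$ on the $8$-dimensional manifold $G_{2}/SO(4)$, while the torsion relation reflects the $\IZ/2$-summand identified above. Non-vanishing of the products $a^{2}$ and $b^{2}$ in their respective target degrees is verified via mod-$2$ reduction $\rho$, which sends $a^{2}$ to $u_{3}^{2} = u_{2}^{3}$ and $b^{2}$ to $u_{2}^{4}$, both nonzero by Proposition \ref{prop: Borel_Hirzebruch G2/SO(4) }. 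The most delicate step is this last verification of the multiplicative structure; everything else is essentially forced by the interplay of the three coefficient systems.
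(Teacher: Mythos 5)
You should first note that the paper never proves Proposition \ref{prop:cohomSo4}: it is quoted from Shi--Zhou, Kotschick--Thung and Akbulut--Kalafat, so your rational/mod-$2$/Bockstein derivation is an independent route. As written, though, it has a genuine gap at its central step. The equality $Sq^1(u_2)=u_3$ does not follow from what you establish: triviality of the mod-$2$ reduction of $H^2(G_2/SO(4);\IZ)$ gives injectivity of the \emph{integral} Bockstein $\tilde\beta\colon H^2(\cdot;\IF_2)\to H^3(\cdot;\IZ)$, but $Sq^1$ is $\tilde\beta$ followed by reduction mod $2$, and if the $2$-torsion of $H^3(\cdot;\IZ)$ were $\IZ/4$ or larger this composite would kill $u_2$. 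Your universal-coefficient comparison of the rational and mod-$2$ Poincar\'e polynomials only locates one cyclic $2$-group in each of degrees $3$ and $6$; it cannot detect the order, so claiming ``exactly two $\IZ/2$-summands'' and then invoking Bockstein injectivity to get $Sq^1(u_2)=u_3$ is circular exactly where the order of the torsion is decided. The repair is to compute $Sq^1u_2$ independently: $u_2,u_3$ are $w_2,w_3$ of an orientable rank-$3$ bundle on $G_2/SO(4)$ (the bundle underlying $\rho_2$ as in Lemma \ref{lemma: SW rho2}, or the restriction of the tautological bundle along the embedding into $\OGr{3}{7}$), and Wu's formula $Sq^1w_2=w_1w_2+w_3=w_3$ together with naturality gives the claim; with that input your Bockstein cancellation does force all $2$-torsion to have order exactly $2$.

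Two further points. Rational and mod-$2$ information say nothing about odd-primary torsion, which you never exclude; you need mod-$p$ input for odd $p$ (for instance Borel's criterion: an equal-rank quotient $G/H$ with $G$ and $H$ free of $p$-torsion has none, and $G_2$, $SO(4)$ have torsion only at $2$), otherwise even the additive statement is not pinned down. For the multiplicative structure, your mod-$2$ reduction only shows $\rho(b^2)=u_2^4\neq0$, i.e.\ that $b^2$ is an \emph{odd} multiple of a generator of $H^8\cong\IZ$, whereas the truncated-polynomial-algebra claim needs $b^2$ to be a generator; this is easily fixed, since $H^4\cong\IZ\langle b\rangle$ is torsion free of rank one and unimodularity of the cup-product pairing $H^4\times H^4\to H^8$ on the closed orientable $8$-manifold forces $\langle b^2,[G_2/SO(4)]\rangle=\pm1$ (alternatively, compare along $\rho_2$ with $H^*(G_2/U(2)_-;\IZ)$, where $\rho_2^*b=\pm y^2$ and $y^3=-2x$). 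The check for $a^2$ is fine, since $H^6\cong\IZ_2$. Finally, the relation as printed, $2b=0$ with $\deg b=4$, is incompatible with what you correctly derive ($H^4\cong\IZ$, torsion in degrees $3$ and $6$); the torsion relation must read $2a=0$, and your write-up silently proves this corrected statement --- you should flag the typo in the proposition rather than pass over it.
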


\subsection{The six  dimensional sphere $G_{2}/SU(3)$. } 
\label{subsec: G2/SU3}

We are going to start with a $G_2$-flag manifold that, even if it do not fibre over $G_2/SO(4)$, it is a good first example of this type of spaces. 

\medskip
Remember that, by theorem \ref{theorem:g2isomorphic}[part ii], $G_2$ can be characterized by triples $\{x, y, z\}$ of mutually orthogonal unitary vectors in $\ImO$, where $z$ is also orthogonal to $x \times y$. This means that the six dimensional sphere, which can be seen as the unitary vectors in the seven dimensional real subspace $\ImO$, carries a transitive action of $G_2$.

\medskip
To determine the isotropy group of an element $l \in S^6$, consider the subspace $V$ defined as the orthogonal complement of $l$ inside $\ImO$, and the complex structure on $V$ given by the left multiplication by $l$. The identification with $\IC^3$, equipped with its stardand Hermitian scalar product, induces a scalar product on $V$ defined as 
\[
	\langle v, w \rangle_{V}= (v,w)+ l(v,lw),
\]
where $(-,-)$ denote the standard real product. Since $g \in (G_2)_{l}$ preserves $(-,-)$ , it also preserves $\langle -, - \rangle_{V}$, and $(G_2)_{l} \subset U(V) \cong U(3)$. Calculating the determinant of $g \in (G_2)_{l}$, can be proved that actually the isotropy group is  isomorphic to $SU(3)$. We have obtained the following result:

\begin{proposition}\label{prop:S6}
	There is a transitive action of $G_2$ on $S^6$ with isotropy group isomorphic to $SU(3)$, i.e. $S^6 \cong  G_2/SU(3)$.
\end{proposition}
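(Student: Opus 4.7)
The proof splits into two tasks: showing that $G_2$ acts transitively on $S^6\subset\ImO$ and identifying the isotropy subgroup at a chosen base point with $SU(3)$. That the action is well defined is immediate from Theorem \ref{theorem:g2isomorphic}, since every $g\in G_2$ sits in $SO(\ImO)\cong SO(7)$ and hence preserves the unit sphere of $\ImO$.

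For transitivity I would exploit the parametrization of $G_2$ recalled in Section \ref{subsec: G2}: for every ordered orthonormal triple $(x,y,z)\subset\ImO$ with $z\perp x\times y$ there is a unique $g\in G_2$ sending $(x,y,z)$ to $(i,j,l)$. Given an arbitrary $v\in S^6$, I would complete $v$ to such a triple in which $v$ plays the role of the third coordinate: pick any $x\in S^6$ with $x\perp v$ and then a unit $y$ in the $4$-dimensional orthogonal complement of $\{x,v,x\times v\}$; the antisymmetry of the associative three-form $\phi(a,b,c)=(a,b\times c)$ forces $v\perp x\times y$, so the triple $(x,y,v)$ is admissible. The resulting $g\in G_2$ satisfies $g(v)=l$, so $g^{-1}(l)=v$, establishing transitivity with base point $l$.

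To identify the isotropy group $(G_2)_l$ I would follow the outline already sketched in the excerpt. Set $V=l^\perp\subset\ImO$, endow $V$ with the complex structure $Jv:=l\cdot v$ (which satisfies $J^2=-\id$ because $l^2=-1$) and with the Hermitian form $\langle v,w\rangle_V=(v,w)+i\,(v,Jw)$. Any $g\in(G_2)_l$ is an $\IR$-algebra automorphism fixing $l$, so $g(Jv)=g(l\cdot v)=l\cdot g(v)=Jg(v)$; combined with preservation of the real inner product this yields $g|_V\in U(V)\cong U(3)$. The main obstacle is to sharpen this inclusion to $SU(3)$. My intended argument packages the octonionic multiplication into a $G_2$-invariant complex top form on $V$: for any $\IC$-orthonormal pair $\{v_1,v_2\}\subset V$, the product $v_1\cdot v_2$ lies in $V$ and is orthogonal to $\{v_1,Jv_1,v_2,Jv_2\}$, so $\{v_1,v_2,v_1\cdot v_2\}$ is a $\IC$-basis of $V$ whose associated top form is built from $G_2$-invariant data and hence preserved by $g$; this forces $\det_\IC(g|_V)=1$.

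A final dimension check concludes the argument: $\dim G_2=14$ and $\dim S^6=6$ yield $\dim(G_2)_l=8=\dim SU(3)$, so the inclusion $(G_2)_l\subseteq SU(3)$ is an equality of Lie groups once $(G_2)_l$ is seen to be connected. Connectedness follows from the long exact homotopy sequence of the fibration $(G_2)_l\to G_2\to S^6$, using $\pi_0(G_2)=0$ and $\pi_1(S^6)=0$. Combining transitivity with this identification yields the diffeomorphism $S^6\cong G_2/SU(3)$.
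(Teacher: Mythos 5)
Your proposal is correct and follows essentially the same route as the paper: transitivity via the characterization of $G_2$ by admissible orthonormal triples, and identification of the isotropy group $(G_2)_l$ inside $U(V)\cong U(3)$ using the complex structure $Jv=l\cdot v$ and the induced Hermitian form, with a determinant argument to land in $SU(3)$. Your extra steps (completing $v$ to an admissible triple explicitly, the invariant complex volume form forcing $\det_{\IC}(g|_V)=1$, and the dimension-plus-connectedness argument giving the equality $(G_2)_l=SU(3)$) merely fill in details the paper leaves as assertions, so no substantive divergence or gap is present.
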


\subsection{The  space  of complex coassociative $2$-planes $G_{2}/U(2)_{+}$.}
\label{subsec: G2/U2+ }

We will consider now the complexification of the purely imaginary subspace of the octonions, which is isomorphic as complex vector space $\IC^{7}$, in symbols $\ImO \otimes_{\IR} \IC \cong_{\IC} \IC^{7}$.

\begin{definition}
	We call a complex $2$-dimensional subspace $W \subset \IC^7$ \emph{coassociative} if $v \times w = 0$ for all $v,w \in W$
\end{definition}

Notice that the coassociative subspace $W$ is automatically isotropic. Also, the associative three-form $\phi \otimes {\rm id}$ defines a complex three-form $\phi_{\IC}$ on the complex space $\IC^{7}$. So, in terms of the three-form defined in \ref{subsec: G2}, a complex vector subspace $W \subset \mathbb{C}^{7}$ is called coassociative if the complexified form $\phi_{\IC}$ vanishes on $V$. We will consider coassociative complex lines ($1$-dimensional complex subspaces) and coassociative planes ($2$-dimensional  complex subspaces).

\medskip
On the other hand, let $J$ be an orthogonal almost complex structure on $\xi^{\perp}$ with $(1,0)$-space $W$, 
where $\xi$ is an associative subspace in $\ImO$, and $W$ is the eigenspace associated to the eigenvalue $i$ of $J$. 
Choose now an orthonormal basis $\{ e_1, e_2, e_3, e_4 \}$ of $\xi^{\perp}$ with $J(e_1) = e_2$ and $J(e_3)=e_4$.  
We say that $J$, or the corresponding $(1,0)$-space $W$, is positive (resp. negative) according as the basis of $\xi^{\perp}$ is positive (resp. negative). 

\medskip
The following result, which connects the notion of complex-coassociative and positive subspaces, is proved in 
\cite[Lemma  2.2, page 293]{svensonwood}. 

\begin{lemma}\label{lemma: coassociative}
Let $\xi$ be an associative $3$-dimensional subspace in $\ImO$, and let $W \subset \xi^{\perp} \otimes \IC$ be a maximally isotropic subspace. Then $W$ is positive if and only if it is complex coassociative.  
\end{lemma}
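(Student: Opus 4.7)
The plan is to translate both conditions into statements about the Pl\"ucker bivector of $W$ and match them. Since $\xi$ is associative, it is the imaginary part of a quaternionic subalgebra $\IH \subset \IO$, and the Cayley-Dickson description of $\IO$ gives $\ImO = \xi \oplus \xi^\perp$ with $\xi^\perp \cdot \xi^\perp \subseteq \xi$. Consequently the cross product restricts to a skew-symmetric map $\tau \colon \Lambda^2 \xi^\perp \to \xi$, which extends complex-bilinearly to $\tau_\IC \colon \Lambda^2(\xi^\perp \otimes \IC) \to \xi \otimes \IC$. By definition $W$ is complex coassociative precisely when the decomposable bivector $u \wedge v$ spanning $\Lambda^2 W$ lies in $\ker \tau_\IC$.

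The next step is to identify $\ker \tau$ with one chirality of the Hodge decomposition of $\Lambda^2 \xi^\perp$. Equip $\xi^\perp$ with the orientation induced from those of $\ImO$ and $\xi$, so that the Hodge star splits $\Lambda^2 \xi^\perp = \Lambda^2_+ \oplus \Lambda^2_-$. The stabiliser $SO(4) \subset G_2$ of $\xi$ acts on each of $\Lambda^2_\pm$ through one of the two $SU(2)$ factors of $\Spin(4)$, and on $\xi \cong \IR^3$ through the other. Hence $\tau$ is an $SO(4)$-equivariant map between three $3$-dimensional irreducibles, with $\xi$ equivalent to exactly one of $\Lambda^2_\pm$. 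By Schur's lemma $\tau$ must vanish on the inequivalent summand, and a direct computation in the basis $\{e_1,\dots,e_7\}$ of $\ImO$, with $\xi = \langle e_1,e_2,e_3\rangle$ and using the expression of $\phi$ recorded in Section \ref{subsec: octonions}, pins this kernel down as $\Lambda^2_+ \xi^\perp$.

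On the other side I would use the classical identification, valid for any oriented $4$-dimensional Euclidean space, between maximally isotropic complex $2$-planes and orthogonal almost complex structures: every such $W$ is the $(+i)$-eigenspace of a unique orthogonal $J$ on $\xi^\perp$. For an adapted orthonormal basis $\{f_1,f_2,f_3,f_4\}$ of $\xi^\perp$ with $Jf_1 = f_2$ and $Jf_3 = f_4$, the bivector $(f_1 - i f_2) \wedge (f_3 - i f_4)$ spans $\Lambda^2 W$, and a short Hodge-star calculation shows that it is self-dual exactly when the basis $\{f_1,f_2,f_3,f_4\}$ is positively oriented. Hence $W$ is positive iff $\Lambda^2 W \subset \Lambda^2_+ \xi^\perp \otimes \IC$.

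Combining the two characterisations closes the argument: $W$ is positive iff $\Lambda^2 W \subset \Lambda^2_+ \xi^\perp \otimes \IC = \ker \tau_\IC$, iff $W$ is complex coassociative. The main obstacle is the orientation bookkeeping in the second paragraph: one must verify that, with the induced orientation of $\xi^\perp$, the kernel of $\tau$ lands in the self-dual summand rather than the anti-self-dual one. This reduces to matching the signs in the structure constants of $\phi$ against the Hodge-star signs in an adapted basis, which is tedious but entirely mechanical and requires no new ideas.
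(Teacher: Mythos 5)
The paper itself offers no proof of this lemma --- it is imported verbatim from Svensson--Wood --- so your argument has to stand on its own, and its skeleton does: coassociativity of a maximally isotropic $W$ depends only on the Pl\"ucker bivector spanning $\Lambda^2 W$; the restricted cross product $\tau\colon\Lambda^2\xi^\perp\to\xi$ is a nonzero map equivariant for the stabiliser $SO(4)\subset G_2$ of $\xi$, so Schur's lemma forces $\ker\tau$ to be exactly one of the two chiral summands; and the adapted-basis calculation showing that $W$ is positive precisely when its Pl\"ucker bivector is self-dual is correct, as is the implicit fact $W\cap\overline{W}=0$, so $W$ really is the $(1,0)$-space of a unique orthogonal $J$. (Minor imprecision: $\xi^\perp\cdot\xi^\perp$ lies in $\IR\oplus\xi$, not in $\xi$; it is the commutator, i.e.\ the cross product, that lands in $\xi$.)

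The gap is the step you wave off as mechanical: deciding which chirality $\ker\tau$ is. That is the entire content of the lemma --- the Schur argument alone only shows that coassociativity singles out one of the two rulings --- and with the conventions actually printed in this paper it does not come out as you assert. For $\xi=\langle e_1,e_2,e_3\rangle$ the displayed three-form gives $e_4\times e_5=e_6\times e_7=-e_1$, $e_4\times e_6=-e_2$, $e_5\times e_7=e_2$, $e_4\times e_7=e_5\times e_6=-e_3$, so $\ker\tau$ is spanned by $e_4\wedge e_5-e_6\wedge e_7$, $e_4\wedge e_6+e_5\wedge e_7$ and $e_4\wedge e_7-e_5\wedge e_6$, which is the \emph{anti}-self-dual summand for the orientation $e_4\wedge e_5\wedge e_6\wedge e_7$ of $\xi^\perp$ induced from the standard orientations of $\ImO$ and $\xi$; with that orientation your argument would conclude ``negative iff coassociative''. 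The resolution is that neither you nor the paper fixes the orientation of $\xi^\perp$ entering the definition of positive: the statement holds for the orientation convention of Svensson--Wood, and your proof is complete only once you state that convention and carry out the sign check against it, rather than asserting $\ker\tau=\Lambda^2_+\xi^\perp$ for an unspecified orientation.
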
 

Consider the space of all complex coassociative $2$-dimensional subspaces of $\IC^{7}$. Since such space has a transitive action of $G_2$ with stabilizer isomorphic to $U(2)$, then we have the following definition.

\begin{definition}
	The space of complex coassociative $2$-planes in $\ImO \otimes \IC$ will be denoted by $G_{2}/U(2)_{+}$.
\end{definition}

The reason for the notation comes from lemma \ref{lemma: coassociative}. Moreover, there is a map 
\[
	\rho_{1}:G_{2}/U(2)_{+} \to G_{2}/ SO_{4},
\] 
given by $W \mapsto (W \oplus \bar{W})^{\perp}$, which exhibits $G_{2}/U(2)_{+}$ as the total space of a locally trivial smooth fibration with fiber $\IC P^{1} \approx S^{2}$. By lemma \ref{lemma: coassociative} the fiber of any $\xi \in G_2/SO(4)$ is all positive maximally isotropic subspaces of $\xi^{\perp} \otimes \IC$, equivalently, all positive orthogonal almost complex structures on $\xi^{\perp}$. 
Actually, by \cite[section 4.4]{svensonwood} there is an identification of $G_2/U(2)_{+}$ with a quaternionic twistor space of $G_{2}/SO_{4}$.

\medskip
As is exposed in \cite[Prop. 3]{kotschickthung}, the cohomology of $G_{2}/U(2)_{+}$ with integral coefficients is generated by classes $g_{i} \in H^{2i}(G_{2}/U(2)_{+}, \IZ)$, for $i=1, \ldots, 5$, and the multiplicative structure is determined by the relations 
\[ 
	g_{1}^{2}=3g_{2}, \, g_{1}g_{2}= 2g_{3}, \, g_{2}^{2} = 2g_{4}, \, g_{1}g_{4}=g_{5}. 
\]
The cohomology with coefficients in $\IF_2$ will be calculated in subsection \ref{subsec: FadellHusseini rho1 and rho2}.

\subsection{The  space  of complex non-coassociative $\mathbf{2}$-planes $\mathbf{G_{2}/ U(2)_{-}}$.}
\label{subsec: G2/U2- }

\begin{definition}\label{def:g2-}
	The space of all $2$-planes in $\ImO \otimes \IC$ which are not complex coassociative will be denoted by $G_{2}/U(2)_{-}$.
 \end{definition}

Similarly to $G_{2}/U(2)_{+}$, the reason for the notation comes from lemma \ref{lemma: coassociative}. A $2$-dimensional real subspace $W$, which is a $(1,0)$-space for a negative orthogonal almost complex structure in $\xi^{\perp}$,  is said to be negative and is still a maximally isotropic subspace of $\xi^{\perp} \otimes \IC$.

\medskip
Alternatively, we can think the space $G_2/U(2)_{-}$ as follows: Let $Q^5$ be the complex quadric 
$\bigg\{ [z_{1}, \ldots , z_{7}] \in \IC P^{6} \mid \underset{i=1}{\overset{7}{\sum}} z_{i}^{2}=0   \bigg\}$ consisting of all $1$-dimensional isotropic subspaces of $\ImO \otimes \IC$. 
There is a $G_2$-equivariant isomorphism from $Q^5$ to $G_2/U(2)_{-}$ given by $\ell \mapsto \ell^{\perp} \cap \ell^{a}$, with inverse $W \mapsto W \times W $, where $\ell^{a} = \{ x \in \ImO \otimes \IC \mid x \times \ell = 0 \}$ is the annihilator of $\ell$.

\medskip
Since $\OGr{2}{7}$ is also diffeomorphic to the complex quadric $Q^5$, we can identify every element in $G_2/U(2)_{-}$ as an oriented plane $P$ with oriented, orthonormal basis $\{x,y\}$. 
Then there is a well defined map $\rho_{6}:G_2/U(2)_{-} \to S^6$ which sends $P \mapsto x \times y := xy$. 
Actually, the oriented plane $P$ can be identify, via the standard almost complex structure of $S^6$, with a complex line in $T_{xy}S^6$. Recall that the standard almost complex structure  on $S^6$ at the point $v \in S^6$ is given by the left-multiplication $L_v$.
On the other hand, given an element $k \in S^6$, there is a oriented orthonormal basis $\{x,y\}$ of a complex line in $T_kS^6$ that satisfies $k = xy$. This means that the fiber over an element $v \in S^6$ is precisely $\mathbb{P}(T_vS^6)$. This exhibits $G_2/U(2)_{-}$ as $\mathbb{P}(TS^6)$ with $\rho_{6} $ as the base point projection. We can also think of $\rho_{6}$ as a fibration with fibers diffeomorphic to $\IC P^{2}$.  
 
\medskip
Finally, the isomorphism $TS^6 \cong T^{*}S^6$ as real vector bundles induces a diffeomorphism 
$\mathbb{P}(TS^6) \cong \mathbb{P}(T^{*}S^6)$.  The following result resumes all the diffeomorphic definitions of $G_{2}/ U(2)_{-}$. See \cite[Prop. 8]{kotschickthung}. 

\begin{proposition} \label{prop:u2-}
	The following $10$-dimensional manifolds are all diffeomorphic to each other:
	\begin{enumerate}
		\item The space of all $2$-planes in $\ImO \otimes \IC$ which are not complex coassociative.
	
		\item The Grassmannian $\OGr{2}{7}$ of oriented $2$-planes in $\IR^7$.	
		
		\item The complex quadric 
		\[
			Q^5=\bigg\{ [z_{1}, \ldots , z_{7}] \in \IC P^{6} \mid \underset{i=1}{\overset{7}{\sum}} z_{i}^{2}=0   \bigg\}
		\] 
		
		\item The projectivization of the tangent and cotangent bundle $\mathbb{P}(TS^6)$ and $\mathbb{P}(T^{*}S^6)$ for any 
		almost complex structure in $S^6$. 
	\end{enumerate}
\end{proposition}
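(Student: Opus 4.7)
The plan is to verify the four descriptions coincide by producing explicit $G_2$-equivariant diffeomorphisms along the chain (ii) $\leftrightarrow$ (iii) $\leftrightarrow$ (i) $\leftrightarrow$ (iv); most of these identifications are already outlined in the paragraphs preceding the statement, so my task is to spell out why each candidate map is well defined, bijective and smooth. A preliminary dimension check confirms all four spaces are $10$-dimensional: $\dim_{\IR} \OGr{2}{7}=2\cdot 5=10$, $\dim_{\IR}Q^{5}=10$, and $\mathbb{P}(TS^{6})$ is a $\IC P^{2}$-bundle over $S^{6}$, so also of real dimension $6+4=10$.

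For (ii) $\leftrightarrow$ (iii) I would use the classical map sending an oriented $2$-plane $P \subset \IR^{7}$ with oriented orthonormal basis $\{x,y\}$ to $[x+iy]\in \IC P^{6}$. A short computation gives
\[
\sum_{j=1}^{7}(x_{j}+iy_{j})^{2}=(|x|^{2}-|y|^{2})+2i\langle x,y\rangle =0,
\]
so the image lies in $Q^{5}$, and the $SO(2)$-ambiguity in the basis corresponds precisely to the $\IC^{\times}$-scaling in the projective class. Both smoothness and bijectivity are standard.

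For (iii) $\leftrightarrow$ (i) I would verify that the map $\ell \mapsto \ell^{\perp}\cap \ell^{a}$ given in the discussion preceding the statement is a $G_2$-equivariant diffeomorphism onto $G_{2}/U(2)_{-}$, with inverse $W\mapsto W\times W$. The key points are that every isotropic line $\ell$ lies in its own orthogonal complement, and that $\ell^{a}$ meets $\ell^{\perp}$ transversally in a $2$-plane which is maximally isotropic but not complex coassociative. This reduces to algebraic identities between the cross product and the three-form $\phi$ of Section \ref{subsec: G2}.

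For (i) $\leftrightarrow$ (iv) I would use the map $\rho_{6}\colon G_{2}/U(2)_{-}\to S^{6}$, $P \mapsto x\times y$, built from an oriented orthonormal basis $\{x,y\}$ of the underlying real plane, and already checked to be $G_2$-equivariant. For $v=x\times y \in S^{6}$, the fiber $\rho_{6}^{-1}(v)$ is the set of oriented $2$-planes in $T_{v}S^{6}$ that are complex lines for the almost complex structure $J_{v}(u)=v\cdot u$ on $S^{6}$; by dimension count this fiber is canonically $\mathbb{P}(T_{v}S^{6})\cong \IC P^{2}$. Globalising yields $G_{2}/U(2)_{-}\cong \mathbb{P}(TS^{6})$, and $\mathbb{P}(TS^{6})\cong \mathbb{P}(T^{*}S^{6})$ follows from the bundle isomorphism $TS^{6}\cong T^{*}S^{6}$ induced by the round metric. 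The main obstacle is precisely this last step: one must verify carefully that the almost complex structure on $S^{6}$ coming from octonionic multiplication is compatible with the notion of negative maximally isotropic plane in $\xi^{\perp}\otimes \IC$, so that an oriented $2$-plane in $\IR^{7}$ fails to be complex coassociative if and only if its image under $\rho_{6}$ lifts to a $J$-complex line in the tangent space. The needed compatibility is essentially Lemma \ref{lemma: coassociative} combined with the cross-product formula $J_{v}(u)=v\times u$ on $\ImO$.
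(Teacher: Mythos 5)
Your route --- (ii)$\leftrightarrow$(iii) via $P\mapsto[x+iy]$, (iii)$\leftrightarrow$(i) via $\ell\mapsto\ell^{\perp}\cap\ell^{a}$ with inverse $W\mapsto W\times W$, and (i)$\leftrightarrow$(iv) via $\rho_{6}$ together with the metric isomorphism $TS^{6}\cong T^{*}S^{6}$ --- is exactly the chain of identifications assembled in the paragraphs preceding the proposition (the paper gives no separate proof, deferring to Kotschick and Thung, Prop.~8), so your approach matches the paper's. One caution for the (iii)$\leftrightarrow$(i) step: the complement in $\ell^{\perp}\cap\ell^{a}$ must be the Hermitian one, since with the complex-bilinear complement your two ``key points'' are incompatible --- there $\ell\subset\ell^{a}\subset\ell^{\perp}$ because $\ell^{a}$ is isotropic, so the intersection would be the whole $3$-dimensional $\ell^{a}$ rather than the asserted $2$-plane, whereas with the Hermitian complement $\ell\not\subset\ell^{\perp}$ and one indeed gets the $2$-dimensional isotropic, non-coassociative plane.
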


The cohomology with integral coefficients of $G_{2}/U(2)_{-}$ is calculated in \cite[Prop. 11]{kotschickthung}. It is the quotient of 
a polynomial algebra with generators $x$ in degree $6$, and $y$ in degree  $2$, satisfying the relations $x^{2}=0 \text{ and } y^{3}= -2x$. Unlike $G_{2}/U(2)_{+}$, using the bundle $\rho_{6}$ we can calculate the cohomology ring of $G_{2}/U(2)_{-}$ with $\IF_2$ coefficients as follows:

\begin{proposition}\label{prop: Cohom G2/U2-}
	The cohomology of $G_{2}/U(2)_{-}$ with coefficients in $\IF_2$ is given by 
	\[
		H^{*}(G_{2}/U(2)_{-}, \IF_2) = \IF_2[ x,y ] \big/  \free{ x^{2}, y^{3} },
	\]
	where $deg(y) = 2$, $deg(x)=6$.
\end{proposition}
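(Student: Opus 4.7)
The plan is to run the Serre spectral sequence of the fiber bundle $\CP{2} \hookrightarrow G_2/U(2)_- \xrightarrow{\rho_6} S^6$ established in Proposition~\ref{prop:u2-}, with coefficients in $\IF_2$. Since $S^6$ is simply connected, the local system on the fiber is trivial, so
\[
E_2^{p,q} = H^p(S^6;\IF_2) \otimes H^q(\CP{2};\IF_2)
\]
is concentrated in the columns $p \in \{0,6\}$ and rows $q \in \{0,2,4\}$. Any nonzero differential $d_r$ out of one of these six entries lands in column $p+r \in \{0,6\}$, forcing $r=6$; but then the target row is $q-5 \in \{-5,-3,-1\}$, which is negative. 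Hence all differentials vanish for bidegree reasons and the spectral sequence collapses at $E_2$.

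From this I would read off the additive structure: $H^*(G_2/U(2)_-;\IF_2)$ is free over $\IF_2$ of rank $6$, with representatives $1, y, y^2, x, xy, xy^2$ in degrees $0,2,4,6,8,10$, where $x := \rho_6^*(\alpha)$ with $\alpha$ the fundamental class of $S^6$, and $y \in H^2$ is any lift restricting to the generator of $H^2(\CP{2};\IF_2)$. The relation $x^2 = 0$ is immediate because $\alpha^2=0$ already on $S^6$. The class $y^3 \in H^6$ has zero image in $E_\infty^{0,6} = 0$, so $y^3$ lies in the filtration piece $F^6 H^6 = \IF_2\cdot x$; that is, $y^3 = \lambda\, x$ for some $\lambda \in \IF_2$.

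The main obstacle is pinning down $\lambda$, which the spectral sequence alone cannot detect. The cleanest resolution is to exploit the diffeomorphism $G_2/U(2)_- \cong \IP(TS^6)$ from Proposition~\ref{prop:u2-}, so that $\rho_6$ becomes the projectivization of the complex rank $3$ bundle $TS^6$. Then $y$ identifies with the mod-$2$ first Chern class of the tautological line bundle, and the projective bundle relation reads
\[
y^3 + \rho_6^*(\bar c_1)\, y^2 + \rho_6^*(\bar c_2)\, y + \rho_6^*(\bar c_3) = 0,
\]
where $\bar c_i$ denotes the mod-$2$ reduction of the $i$-th Chern class of $TS^6$. Since $TS^6$ is stably parallelizable ($TS^6 \oplus \varepsilon^1 \cong \varepsilon^7$), all positive-degree Stiefel-Whitney classes of $TS^6$ vanish, hence $\bar c_1 = \bar c_2 = \bar c_3 = 0$ and the relation reduces to $y^3 = 0$. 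Equivalently, one may reduce mod $2$ the integral relation $y^3 = -2x$ of \cite{kotschickthung}. Either route seals the presentation $H^{*}(G_2/U(2)_-;\IF_2) = \IF_2[x,y]\big/\free{x^2,\, y^3}$.
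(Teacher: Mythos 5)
Your proof is correct and follows essentially the same route as the paper: the Serre spectral sequence of $\CP{2} \hookrightarrow G_2/U(2)_- \xrightarrow{\rho_6} S^6$ with $\IF_2$ coefficients, which collapses at $E_2$ for bidegree reasons. In fact you are more careful than the paper, which reads the ring presentation directly off $E_2 = E_\infty$ without addressing the possible multiplicative extension $y^3 = x$ in $H^6$; your resolution via the projective-bundle relation for $\IP(TS^6)$ together with the vanishing of the Stiefel--Whitney classes of $TS^6$ (or, equivalently, reducing the integral relation $y^3 = -2x$ mod $2$) settles exactly that ambiguity.
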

\begin{proof}
	Consider the fiber bundle 
	\[ 
		\CP{2} \hookrightarrow  G_2/U(2)_{-} \rightarrow S^6 
	\]
	and the corresponding Serre spectral sequence. Since $S^6$ is simply connected, there are no local coefficients and the 
	$E_2$-term is given by 
	\[  
		E_2^{p,q} = H^p \big( S^6 ;H^q( \mathbb{CP}^2 ;\IF_2 ) \big).
	\] 
	Let us denote the cohomology of the fiber $\CP{2}$ and the base space $S^6$ as follows: 
	\[
		H^*(S^6;\IF_2) = \IZ_2[x]/\langle x^2 \rangle \; \text{ and } \; H^*(\mathbb{CP}^2;\IF_2) = \IF_2[y]/	\langle y^3 \rangle. 
	\] 	
	For an illustration of the $E_2$-term see Figure \ref{Figure: sseq 1}. Then, since all 
	the possible differentials are trivial, $E_2^{p,q} \cong E_{\infty}^{p,q}$ and 
	\[
		 H^{n}(G_{2}/U(2)_{-}, \IF_2) = \IF_2[ x,y ] \big/  \langle x^{2}, y^{3}  \rangle. 
	\]
	\begin{figure}[h]
		\centering
		\includegraphics[scale=1.1]{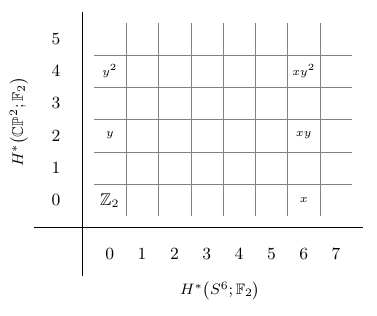}
		\caption{ $E_2^{p,q} = H^p \big( S^6 ;H^q( \mathbb{CP}^2 ;\IF_2 ) \big) \Rightarrow H^*( G_2/U(2)_- ;\IF_2 ) $.}
		\label{Figure: sseq 1}
	\end{figure}
\end{proof}

Similar to $G_{2}/U(2)_{+}$, there exists a map 
\[
	\rho_{2} \colon G_{2}/U(2)_{-} \to G_{2}/ SO_{4},
\] 
given by $W \mapsto (W \oplus \bar{W})^{\perp}$, which exhibits $G_{2}/U(2)_{-}$ as the total space of a locally trivial smooth fibration with fiber $\IC P^{1} \approx S^{2}$. By lemma \ref{lemma: coassociative}, the fiber at any $\xi \in G_2/SO(4)$ consist of all negative maximally isotropic subspaces of $\xi^{\perp} \otimes \IC$, or equivalently, all negative orthogonal almost complex structures on $\xi^{\perp}$. 
%

\subsection{ The full flag  manifold $\mathbf{G_{2}/ U(1)\times U(1)}$. }
\label{subsec: G2/U(1)xU(1) }

Given a complex isotropic line $\ell \subset \ImO \otimes_{\IR } \IC$, consider the annihilator $ \ell ^{a}$, that is the subspace of 
$\ImO \otimes_{\IR} \IC$ described as $\{ x \in \ImO \otimes_{\IR} \IC \mid x \times \ell=0\}.$ Notice that this is a complex $3$-dimensional isotropic subspace of $\ImO \otimes_{\IR} \IC$. Since the maximal torus in $G_{2}$ is of rank $2$, the complete flag manifold $ G_{2}/U(1)\times U(1) $ is a smooth complex manifold of dimension $6$ that can be described as follows: The space of pairs $(\ell, D)$ where $\ell$ is a complex isotropic line, $D$ is a $2$-plane containing $\ell$, and both are contained in 
$\ell^{a}$.

\medskip
For every pair $(\ell, D)$ in $G_2/U(1)\times U(1)$, we write $D = \ell \oplus q$, where $q$ is the orthogonal complement of 
$\ell$. Then we get a fibration 
\[
	\rho_{3} \colon G_2/U(1)\times U(1) \to G_2/SO(4)
\] 
given by $ (\ell, D) \mapsto \xi $, where $\xi \otimes_{\IR} \IC = q \oplus \bar{q} \oplus (q \times \bar{q})$. The map $\rho_{3}$ factors through a fibration over $G_2/U(2)_{\pm}$, which sends $(\ell,D)$ to the positive (resp. negative) maximally isotropic subspace of $\xi^{\perp} \otimes_{\IR} \IC$ which contains $\ell$, and $\rho_{1}$ (resp. $\rho_{2}$) defined above.

\begin{equation}
\label{Diagram: fiber_bundles2}	
		\xymatrix{
		&\ar[dl]_{\rho_{4}} G_2/U(1) \times U(1) \ar[dd]^{\rho_{3}}  \ar[dr]^{\rho_{5}}  &   										&\\
		G_2/U(2)_+\ar[dr]_{\rho_{1}}&   				 &  G_2/U(2)_- \cong Q_5 \ar[dl]^{\rho_{2}} \ar[dr]^{\rho_{6}} 	&\\
			 									 & 		 G_2/SO(4) 	 									&												               	& S^6}
\end{equation}

To summarize, all the maps $\rho_{i}$ in the commutative diagram \ref{Diagram: fiber_bundles2} are fibrations, with fiber diffeomorphic to $\IC P^{1} \cong S^{2}$ (except $\rho_{3}$ and $\rho_{6}$). 
Furthermore, $\rho_1$, $\rho_2$ and $\rho_3$ are fiber bundles over $G_2/SO(4)$.  
In the case of $\rho_{6}$ the fiber is diffeomorphic to $\IC P^{2}$. This means that, using the map $\rho_{5}$, we can also calculate the cohomology of $G_{2}/U(1) \times U(1)$ with $\IF_2$ coefficients.

\begin{proposition}\label{prop: Cohom G2T2}
	The cohomology of $G_{2}/U(1) \times U(1)$ with $\IF_2$ coefficients is given by 
	\[ H^{n}(G_{2}/U(1) \times U(1), \IF_2) =
					  \begin{cases}
					    \IZ_2       & \quad \text{if } n =0,12 \\
					    \IZ_2 \oplus \IZ_2  & \quad \text{if } n =2,4,6,8,10 \\
					    0            & \quad \text{otherwise.}  
					  \end{cases}
\]
\end{proposition}
\begin{proof}
	Consider the fibration
	\[ 
		S^2\hookrightarrow G_{2}/U(1) \times U(1) \xrightarrow{\rho_5}  G_2/U(2)_{-}, 
	\]
	and the corresponding Serre spectral sequence, where the cohomology of the fiber is described as 
	$H^*(S^2;\IF_2) = \IF_2[z]/z^2$. Since $\pi_1( G_2/U(2)_{-} )$ is trivial, there are no local coefficients and the $E_2$-term is 
	given by 
	\[
		E_2^{p,q} = H^p \big( G_2/U(2)_{-} ;H^q( S^2 ;\IF_2 ) \big). 	
	\]
	For an illustration of the $E_2$-term see Figure \ref{Figure: sseq 2}. The rest of the proof is similar to the one in 
	\ref{prop: Cohom G2/U2-}, since all the possible differentials are trivial. 
	\begin{figure}[h]
		\centering
		\includegraphics[scale=1.1]{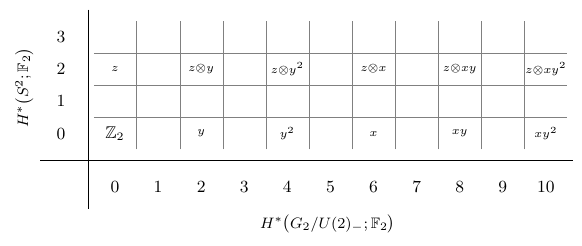}
		\caption{ $E_2^{p,q} = H^p \big( G_2/U(2)_{-} ;H^q( S^2 ;\IF_2 ) \big) \Rightarrow H^*( G_{2}/U(1) \times U(1);\IF_2 ) $.}
		\label{Figure: sseq 2}
	\end{figure}
\end{proof}
 

\section{ \Large Calculations of Fadell-Husseini Index.}
\label{sec: Calculations F_HIndex}

We start by recalling the definition of the Fadell-Husseini index. 

\subsection{The Fadell-Husseini Index}
\label{subsec:FadelHusseini}

Let $G$ be a finite group, and let $R$ be a commutative ring with unit. For a $G-$ equivariant map $p \colon X \to B$,
the \emph{Fadell-Husseini index} of $p$ with coefficients in $R$ is defined to be the kernel ideal of the following induced map 
\begin{align*}
	\ind_G^B(p;R) & = \ker \big( p* \colon H^*(EG \times_G B;R) \longrightarrow H^* (EG \times_G X;R)  \big)  \\
		& = \ker \big( p* \colon H_G^*(B;R) \longrightarrow H_G^* (X;R)  \big).
\end{align*}
Here $H^*_{G}( \cdot )$ stands for the Borel cohomology defined as the \v{C}ech cohomology of the Borel construction 
$EG \times_G \cdot$. 

\medskip
The most important property of the Fadell-Husseini index is the \emph{monotonicity} which states the following: If $p \colon X \to B$ and $q \colon Y \to B$ are $G-$equivariant maps, and $f \colon X \to Y$ is a $G-$equivariant map such that $p = q \circ f$, then 
\[ 
	\ind_G^B(p;R) \supseteq \ind_G^B(q;R).  
\]
The monotonicity of the index is usually the key ingredient to verify the existence of equivariant maps.
Is common to relate the Fadell-Husseini index with spectral sequences since, under certain conditions, 
by \cite[Theorem 5.19]{mccleary2001user} the Serre spectral sequence compute the kernel of induced homomorphisms in cohomology. 

\medskip
In the case when $B$ is a point we simplifly notation and write $\ind_G^B(p;R) = \ind_G(X;R)$. From here on we will be considering $\IF_2$ coefficients. For more details see \cite{fadellhusseini}

\subsection{Dold's argument }
\label{subsec: Dold argument}


In order to compute the Fadell-Husseini index of some of the fibrations exposed in section \ref{sec: G2 Flag Manifolds}, 
we present a result of Albrecht Dold \cite{dold1988parametrized} which allow us to deduce some differentials of the Serre spectral sequence associated to a particular kind of fiber bundles. Let us start with the general argument.

\medskip
Let $ E \xrightarrow{\pi} B \xleftarrow{\pi'} E' $ be vector bundles of fiber-dimension $n,m$ over the same paracompact space $B$, and let $\Map{f}{S(E)}{E'}$ be an odd map ($f(-x) = -f(x)$), where $S(E)$ is the total space of the sphere bundle associated to $\pi$, such that
\[ 
	\xyTriangle{S(E)}{f}{E'}{s\pi}{\pi'}{B}
\]
commutes. Let us define $Z_f=\{ x \in S(E) \mid f(x)=0\}$, where $0$ stands for the zero section of $\pi'$, and the projection maps 
\[ 
	S(E) 	\rightarrow \bar{S}(E) = S(E)/\IZ_2 \quad \text{and} \quad Z \rightarrow \bar{Z} = Z/\IZ_2 , 
\]
where we are considering the antipodal action.

\medskip
Cohomology $H^*$ is understood in the \v{C}ech sense with mod $2$ coefficients, and $H^*(B)[t]$ is the polynomial ring over $H^*(B)$ in one indeterminate $t$ of degree $1$. Since the antipodal action is fixed point free in $S(E)$ and $Z$, the projection maps $S(E) 	\rightarrow \bar{S}(E)$ and $Z \rightarrow \bar{Z}$ are $2$-sheeted covering maps. Their characteristic classes are denoted by $u$, resp. $u\mid \bar{Z}$, which can be replaced by the indeterminate $t$ and obtain an homomorphism of $H^*(B)$-algebras
\[ 
	\sigma \colon H^*(B)[t] \longrightarrow H^*( \bar{S}(E) ) \longrightarrow H^*(\bar{Z})  
\]
given by $t \longmapsto u \longmapsto u \mid \bar{Z}$. Dold proved the following result.

\begin{proposition}
\label{prop: Dold}
	If $q(t) \in H^*(B)[t]$ is such that $\sigma(q(t)) =  0$, then 
	\[
		q(t)W(\pi';t) = W(\pi;t)q'(t)
	\] 
	for some $q'(t) \in H^*(B)[t]$, where $W(\pi;t) = \sum_{j=0}^{n} w_j(\pi) \otimes t^{n-j}$. 
\end{proposition}	

The last theorem means that, under the last conditions, $W(\pi;t)$ divides $q(t)W(\pi';t)$. We show the effectiveness of this theorem in the following remark.

\begin{remark}
	Under the same hypothesis, consider the fiber bundles \\
	\begin{tabular}{ c c } 
	\begin{minipage}{0.4\textwidth}
	     		\[ \FiberBundle{S^{n-1}}{S(E)}{s\pi}{B} \]
	   		\end{minipage}
	     	  &
	     	 \begin{minipage}{0.4\textwidth}
	     	  	\[ \FiberBundle{\{0\}}{B \times \{0\}}{ {\rm proj}_1 }{B } \]	
	     	 \end{minipage}
	\end{tabular} 
	\\
	where $\IZ_2$ acts antipodally on $S(E)$ and trivial on $B$. Let  $\Map{f}{ S(E) }{ B \times \{0\}}$ be a 
	$\IZ_2$-equivariant map given by $f(e)=( \pi(e),0)$, such that the following diagram commutes:
	\[ 
		\xyTriangle{ SE }{f}{B \times \{0\}}	{s\pi}{ {\rm proj}_1}	{B}	
	\]
	Since $Z_f = S(E)$ and $W(proj_1,t) = 1$, if we consider $q(t)$ as the image of the transgression map 
	$d_n^{0,n-1}$ of the Serre spectral sequence associated to the sphere bundle 
	\[ 
		S^{n-1} \hookrightarrow  \CBorel{\IZ_2}{E}	\to B\IZ_2 \times B,	
	\]
	by proposition \ref{prop: Dold}, 
	\[ 
		q(t) = d_n^{0,n-1}(z) = \sum_{j=0}^{n} w_j(\pi) \otimes t^{n-j},
	\]
	where $H^*(S^{n-1}) = \IF_2[z]/\free{z^2}$.
\end{remark}

We are going to use Dold's argument in some of the computations that we present.

\subsection{The Fadell-Husseini index of $\rho_1$ and $\rho_2$.}
\label{subsec: FadellHusseini rho1 and rho2}

Following the idea presented in section \ref{subsec: Dold argument}, we are going to start studying the Stiefel-Whitney classes of $\rho_1$ and $\rho_2$ to deduce their corresponding Serre spectral sequences. 

\medskip
Since $G_2/SO(4)$ is a quaternionic Kähler manifold, and the bundles $\rho_1$ and $\rho_2$ are twistors fibrations over 
$G_2/SO(4)$, we can think of each $\rho_i$ as the associated sphere bundle of a $3$ rank vector bundle $\eta \colon H \to G_2/SO(4)$ with non-trivial Euler class.  
In this way, we are considering the Stiefel-Whitney classes of the vector bundle $\eta$.
For more detail about the bundle $S\eta$ see \cite{svensonwood} and \cite{salamon1982quaternionic}.

\medskip
Let us start describing the induced homomorphism $\rho_2^{*}$.

\begin{lemma}
\label{lemma: Induced map rho_2}
	The induced homomorphism in cohomology with $\IF_2$ coefficients, denoted by $\rho_2^*$, maps the class $u_2$ to $y$. 
\end{lemma}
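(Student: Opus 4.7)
The natural plan is to run the Serre spectral sequence for the fibration
\[
S^{2} \hookrightarrow G_{2}/U(2)_{-} \xrightarrow{\rho_{2}} G_{2}/SO(4)
\]
with $\mathbb{F}_{2}$ coefficients, and to read off the edge homomorphism. Since the base is simply connected the $E_{2}$-term is
\[
E_{2}^{p,q} = H^{p}\bigl(G_{2}/SO(4);\mathbb{F}_{2}\bigr) \otimes H^{q}(S^{2};\mathbb{F}_{2}),
\]
with $H^{*}(G_{2}/SO(4);\mathbb{F}_{2}) = \mathbb{F}_{2}[u_{2},u_{3}]/\langle u_{2}^{3}+u_{3}^{2},\,u_{2}^{2}u_{3}\rangle$ by Proposition~\ref{prop: Borel_Hirzebruch G2/SO(4) }, and $H^{*}(S^{2};\mathbb{F}_{2}) = \mathbb{F}_{2}[s]/\langle s^{2}\rangle$ with $\deg(s)=2$. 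The only room for a nonzero differential is the transgression $d_{3}\colon E_{3}^{0,2} \to E_{3}^{3,0}$, and the target is one-dimensional, spanned by $u_{3}$.

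The key step is to show $d_{3}(s)=u_{3}$. I would argue this by dimension counting. A direct tally gives $\dim_{\mathbb{F}_{2}} H^{*}(G_{2}/SO(4);\mathbb{F}_{2}) = 6$ (one class in each of degrees $0,2,3,4,5,6,8$, with degree $7$ vanishing by $u_{2}^{2}u_{3}=0$ and Poincar\'e duality), and the cohomology of $G_{2}/U(2)_{-}$ computed in Proposition~\ref{prop: Cohom G2/U2-} has total dimension $6$ as well. If $d_{3}(s)$ were zero the spectral sequence would collapse at $E_{2}$, giving total dimension $12$; hence $d_{3}(s)=u_{3}$. By multiplicativity of the spectral sequence all the other differentials are then determined.

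With this in hand the computation of $E_{\infty}^{p,0}$ is a quotient by the ideal generated by $u_{3}$ in $H^{*}(G_{2}/SO(4);\mathbb{F}_{2})$. In particular $E_{\infty}^{2,0}$ is one-dimensional, generated by the class of $u_{2}$, and it coincides with the filtration-$2$ piece of $H^{2}(G_{2}/U(2)_{-};\mathbb{F}_{2}) = \mathbb{F}_{2}\cdot y$. The edge homomorphism therefore sends $u_{2}$ to a nonzero element of $\mathbb{F}_{2}\cdot y$, i.e.\ $\rho_{2}^{*}(u_{2}) = y$. Squaring gives $\rho_{2}^{*}(u_{2}^{2})=y^{2}$, as claimed.

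The main obstacle, modest in this case, is pinning down $d_{3}(s)$ without hand-waving; the dimension count above is the cleanest route, but one could alternatively invoke transgression of the Euler class of the $S^{2}$-bundle (which lives in degree $3$ and must match the obstruction class $u_{3}$) to obtain the same conclusion intrinsically. Either way, once $d_{3}(s)=u_{3}$ is established, the rest is a one-line edge-homomorphism argument.
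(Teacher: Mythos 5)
Your argument is correct, but it runs through the Serre spectral sequence and its edge homomorphism, whereas the paper argues directly from the Gysin exact sequence of the sphere bundle $\rho_{2}$: there, $H^{1}(G_{2}/SO(4);\mathbb{F}_{2})=0$ forces $\rho_{2}^{*}$ to be injective in degree $4$, and since $H^{4}(G_{2}/U(2)_{-};\mathbb{F}_{2})=\langle y^{2}\rangle$ this gives $\rho_{2}^{*}(u_{2}^{2})=y^{2}$ (and likewise $\rho_{2}^{*}(u_{2})=y$) in one stroke, without ever identifying the transgression. Your determination of $d_{3}(s)=u_{3}$ is in fact not needed for the lemma: $E_{\infty}^{2,0}=E_{2}^{2,0}$ for purely positional reasons (no differential enters or leaves bidegree $(2,0)$ when the fiber is $S^{2}$), so the edge map is injective on $H^{2}$ and $\rho_{2}^{*}(u_{2})=y$, $\rho_{2}^{*}(u_{2}^{2})=y^{2}$ follow immediately; the dimension count is extra work that the paper instead exploits later, when it computes $w_{3}(\rho_{2})=u_{3}$. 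Two harmless numerical slips in your count: the total $\mathbb{F}_{2}$-dimension of $H^{*}(G_{2}/SO(4);\mathbb{F}_{2})$ is $7$, not $6$ (you yourself list seven nonzero degrees $0,2,3,4,5,6,8$), so collapse at $E_{2}$ would give total dimension $14$ rather than $12$; since either number exceeds $6=\dim_{\mathbb{F}_{2}}H^{*}(G_{2}/U(2)_{-};\mathbb{F}_{2})$, your contradiction and hence $d_{3}(s)=u_{3}$ still stand. In short: your route is sound and a bit heavier; the paper's Gysin argument buys the same conclusion with less machinery, while your identification of $d_{3}$ anticipates information the paper derives separately.
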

\begin{proof}
	Let us start analyzing the Gysin exact sequence associated to the sphere bundle $\rho_2$,
	\begin{multline*}
	 \cdots \rightarrow H^{i-3}( G_2/SO(4) ;\IF_2 )  \xrightarrow{\smile w_3} H^{i}( G_2/SO(4) ;\IF_2 ) \rightarrow \\
	 	 \xrightarrow{\rho_2^*} H^{i}( G_2/U(2)_-  ;\IF_2 ) \rightarrow H^{i-2}( G_2/SO(4) ;\IF_2 ) \rightarrow \cdots .
	\end{multline*}
	Since $ H^{1}(G_2/SO_4, \IF_2) = 0 $, by the exactness of the sequence, 
	\[
		\rho_2^* \colon H^{4}( G_2/SO(4) ;\IF_2 ) \rightarrow H^{4}( G_2/U(2)_-  ;\IF_2 )
	\] 
	is a monomorphism. This means that $\rho_2^*(u_2^2) = y^2$ and consequently  $\rho_2^*(u_2) = y$.
\end{proof} 

\medskip
Now, since we already know the cohomology of $G_2/U(2)_{-}$, we are going to start with the characteristic classes of $\rho_2$.

\begin{lemma} 
\label{lemma: SW rho2}
	The Stiefel-Whitney classes of the fiber bundle $\Map{\rho_2}{G_2/U(2)_-}{G_2/SO(4)} $ are non-zero in dimensions 
	$0,2$ and $3$.
\end{lemma}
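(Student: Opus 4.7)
The plan is to determine the rank-$3$ real vector bundle $E \to G_2/SO(4)$ whose unit-sphere bundle is $\rho_2$ by computing its total Stiefel--Whitney class $w(E) \in H^*(G_2/SO(4);\IF_2)$, from which the non-zero components in dimensions $0, 2, 3$ can be read off.

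I would begin from the tangent bundle decomposition associated with the fibration $\rho_2$. Since the vertical tangent bundle $V$ satisfies $V \oplus \underline{\IR} \cong \rho_2^*(E)$, and since $T G_2/U(2)_- \cong V \oplus \rho_2^*\bigl( T G_2/SO(4) \bigr)$, multiplicativity of total Stiefel--Whitney classes yields
\[
  w\bigl( T G_2/U(2)_- \bigr) = \rho_2^*\bigl( w(T G_2/SO(4)) \bigr) \cdot \rho_2^*\bigl( w(E) \bigr).
\]
Combined with Proposition~\ref{prop: Borel_Hirzebruch G2/SO(4) } and the fact that $H^i(G_2/SO(4);\IF_2)$ is one-dimensional for $i = 4,6,8$, this forces $w(T G_2/SO(4)) = 1 + u_2^2 + u_2^3 + u_2^4$.

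Next I would compute $w(T G_2/U(2)_-)$ via the identification $G_2/U(2)_- \cong Q^5 \subset \CP{6}$ from Proposition~\ref{prop:u2-}. The normal bundle of a smooth quadric in projective space is $\mathcal{O}(2)$, so $c(TQ^5) = (1+h)^7/(1+2h)$, which reduces modulo $2$ to $(1+h)^7$. By the Lefschetz hyperplane theorem the restriction of the hyperplane class is the generator $y \in H^2(G_2/U(2)_-;\IF_2)$ of Proposition~\ref{prop: Cohom G2/U2-}, and using $y^3 = 0$ one obtains $w(T G_2/U(2)_-) = 1 + y + y^2$. Lemma~\ref{lemma: Induced map rho_2} then forces $\rho_2^*(u_2) = y$, since $\rho_2^*(u_2)^2 = \rho_2^*(u_2^2) = y^2 \neq 0$, and hence $\rho_2^*\bigl(w(T G_2/SO(4))\bigr) = 1 + y^2$.

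Dividing now gives $\rho_2^*(w(E)) = (1+y+y^2)(1+y^2)^{-1} = 1+y$ (modulo $y^3 = 0$). Matching degrees, $w_1(E) \in H^1(G_2/SO(4);\IF_2) = 0$ automatically, and the injectivity of $\rho_2^*$ in degree $2$ forces $w_2(E) = u_2$. To pin down $w_3(E)$ I would invoke the Gysin sequence of the sphere bundle $\rho_2$: since $H^3(G_2/U(2)_-;\IF_2) = 0$, the map $\smile w_3 \colon H^0(G_2/SO(4);\IF_2) \to H^3(G_2/SO(4);\IF_2)$ is surjective, giving $w_3(E) = u_3$. This exhibits the Stiefel--Whitney classes of $\rho_2$ as non-zero precisely in dimensions $0$, $2$ and $3$. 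The main technical delicacy lies in the quadric step: one must justify that the hyperplane class pulls back to the generator $y$ rather than vanishing mod $2$, and carry out the reduction of $(1+h)^7/(1+2h)$ carefully, exploiting both the nilpotency $y^3 = 0$ and the vanishing of $2h$ modulo~$2$.
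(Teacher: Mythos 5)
Your argument is correct, and it reaches the lemma by a partly different route than the paper. The identification of $w_3(\rho_2)=u_3$ is exactly the paper's step: both of you read it off from the Gysin sequence segment $H^0(G_2/SO(4);\IF_2)\xrightarrow{\smile w_3}H^3(G_2/SO(4);\IF_2)\xrightarrow{\rho_2^*}H^3(G_2/U(2)_-;\IF_2)=0$. Where you diverge is $w_2$: the paper stays inside the Thom-class definition and argues that $w_3\neq 0$ means $Sq^3(th)\neq 0$, so by $Sq^3=Sq^1\circ Sq^2$ also $Sq^2(th)\neq 0$, hence $w_2\neq 0$ and therefore $w_2=u_2$ since $H^2(G_2/SO(4);\IF_2)=\free{u_2}$ --- a two-line argument needing no extra geometric input. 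You instead compute $w_2$ from the tangent-bundle splitting $T(G_2/U(2)_-)\cong V\oplus\rho_2^*T(G_2/SO(4))$ with $V\oplus\underline{\IR}\cong\rho_2^*E$, feeding in Borel--Hirzebruch's $w(T(G_2/SO(4)))$ (Proposition \ref{prop: Borel_Hirzebruch G2/SO(4) }) and the Chern-class computation of the quadric $Q^5\subset\CP{6}$ via Proposition \ref{prop:u2-}; this costs more machinery (Lefschetz, the normal bundle $\mathcal{O}(2)$, the diffeomorphism with $Q^5$) but buys the total class $w(T(G_2/U(2)_-))=1+y+y^2$ as a byproduct and an independent cross-check of Lemma \ref{lemma: Induced map rho_2}. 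Two small remarks: after pulling back, only $w_2=w_3=0$ and $w_4=u_2^2$ of the base's tangent classes actually matter, since $u_2^3$ and $u_2^4$ map to $y^3=y^4=0$, so your computation does not really depend on the precise values of $w_6,w_8$; and you correctly noticed that the division only yields $\rho_2^*(w_3(E))=0$, which is why the Gysin step for $w_3$ is indispensable in your scheme --- had you omitted it, the argument would have been incomplete, since $\rho_2^*$ is not injective in degree $3$.
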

\begin{proof}
	Consider again the Gysin exact sequence applied to the sphere bundle $\rho_2$ 
	\begin{multline*}
		\ldots \rightarrow H^{i-3}( G_2/SO(4) ;\IF_2 )  \xrightarrow{\smile w_3} H^{i}( G_2/SO(4) ;\IF_2 ) \xrightarrow{\rho_2^*} 
		H^{i}(G_2/U(2)_- ;\IF_2 ) \rightarrow  \\ 	\rightarrow H^{i-2}( G_2/SO(4) ;\IF_2 ) \rightarrow \ldots
	\end{multline*}
	Since $H^3( G_2/U(2)_-; \IF_2 ) = 0$, we get an epimorphism 
	\[  
		H^0(G_2/SO(4) ;\IF_2) \twoheadrightarrow H^3(G_2/SO(4) ;\IF_2) 
	\]
	that by proposition \ref{prop: Borel_Hirzebruch G2/SO(4)} can be seen as $ \IZ_2 \twoheadrightarrow \free{u_3}$. Hence, 
	$w_3(\rho_2) = u_3$. Considering the definition of the Stiefel-Whitney classes, this means that
	\[ 
		\phi^{-1} \circ Sq^3( th ) \neq 0,  
	\]
	where $th$ is the Thom class associated to $\rho_2$. Then, since 
	\[
		Sq^3 = Sq^1 \circ Sq^2,
	\] 
	$Sq^2(th)  \neq 0$ and  consequently $w_2 \neq 0$. Is not hard to see that $w_2(\rho_2) = u_2$.
\end{proof}

Considering the Gysin exact sequence associated to $\rho_1$, by proposition 
\ref{prop: Borel_Hirzebruch G2/SO(4)} we get that $H^3( G_2/U(2)_{+}; \IF_2 )$ is also trivial. Then, using the same arguments in lemma \ref{lemma: SW rho2}, $\rho_1$ has the same Stiefel-Whitney classes than $\rho_2$. 

\medskip 
Now that we know the Stiefel-Whitney classes of $\rho_1$, we can calculate the cohomology of $G_2/U(2)_{+}$ with $\IF_2$ coefficients as follows: Consider the Serre spectral sequence associated to $\rho_{1}$, with $E_2$-term given by
\[ 
	E_2^{p,q} = H^p \big( G_2/SO(4) ;\mathcal{H}^q( S^2 ;\IF_2 ) \big). 
\]
For an illustration of the $E_2$-term see Figure \ref{Figure: sseq 3}. Notice that, for the same reason exposed in \ref{prop: Cohom G2/U2-}, there are no local coefficients. Then, applying the Gysin sequence to $\rho_1$ \cite[Example 5.C]{mccleary2001user}, and by lemma \ref{lemma: SW rho2}, we get that
\[
	d_3^{0,2}(z) = w_3( \rho_{1} ) = u_3,
\]
where $H^2(S^2; \IF_2) = \free{z}$. Finally, using the Leibniz rule, $d_3^{0,2}$ determinen the complete cohomology of 
$G_2/U(2)_{+}$. We conclude then the following result.

\begin{figure}[h]
	\centering
	\includegraphics[scale=1.1]{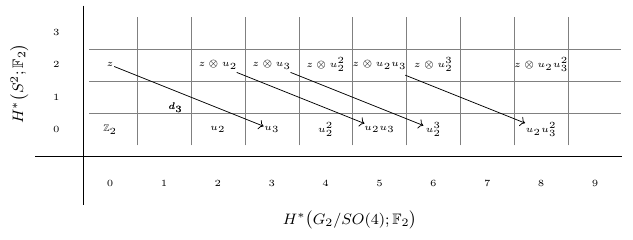}
	\caption{ $E_2^{p,q} = H^p \big( G_2/SO(4) ;H^q( S^2 ;\IF_2 ) \big) \Rightarrow H^*( G_2/U(2)_{+};\IF_2 ) $.}
	\label{Figure: sseq 3}
\end{figure}

\begin{corollary}
\label{corol: Cohom isomorphic}
	The $G_2$ flag manifolds $G_2/U(2)_{+}$ and $G_2/U(2)_{-}$ have isomorphic cohomology rings when we consider $\IF_2$ 	
	coefficients. Moreover, the result in lemma \ref{lemma: Induced map rho_2} applies in the same way to ${\rho_1}^*$.
\end{corollary}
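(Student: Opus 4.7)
The plan is to finish the Serre spectral sequence argument set up immediately before the statement, and then to identify the resulting ring with $\IF_2[x,y]/\free{x^2,y^3}$ by a filtration/dimension check. Starting from the transgression $d_3^{0,2}(z)=u_3$, the Leibniz rule propagates the differential as $d_3(z\cdot a)=u_3\,a$ for every $a\in H^*(G_2/SO(4);\IF_2)$. Using the additive $\IF_2$-basis $\{1,u_2,u_3,u_2^2,u_2u_3,u_2^3,u_2^4\}$ of the base cohomology, which follows from Proposition \ref{prop: Borel_Hirzebruch G2/SO(4) } together with the consequences $u_2^3=u_3^2$, $u_2^2u_3=0$, and $u_3^3=u_2\cdot u_2^2u_3=0$, multiplication by $u_3$ has image $\free{u_3,u_2u_3,u_2^3,u_2^4}$ and kernel $\free{u_2^2,u_2^3,u_2^4}$. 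Consequently the $E_4=E_\infty$-page carries exactly one $\IF_2$-summand in each of total degrees $0,2,4,6,8,10$, represented respectively by $1,\,u_2,\,u_2^2,\,zu_2^2,\,zu_2^3,\,zu_2^4$.

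Next I would set $y:=\rho_1^*(u_2)\in H^2(G_2/U(2)_+;\IF_2)$ and pick $x\in H^6(G_2/U(2)_+;\IF_2)$ to be any lift of $zu_2^2\in E_\infty^{4,2}$. The multiplicative structure is then pinned down by two filtration/degree observations: first, $y^3$ lies in $F^6H^6$, but the only non-zero associated-graded piece of $H^6$ occurs at $E_\infty^{4,2}$, so $F^5H^6=F^6H^6=0$ and hence $y^3=0$; second, $x^2\in H^{12}=0$ since $G_2/U(2)_+$ is a closed smooth $10$-manifold. A dimension count now shows $\{1,y,y^2,x,xy,xy^2\}$ is an $\IF_2$-basis, yielding
\[
    H^*(G_2/U(2)_+;\IF_2) \;\cong\; \IF_2[x,y]\big/\free{x^2,\,y^3}, \qquad \deg y=2,\ \deg x=6,
\]
which coincides with $H^*(G_2/U(2)_-;\IF_2)$ as computed in Proposition \ref{prop: Cohom G2/U2-}.

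For the second assertion I would reproduce the Gysin-sequence argument of Lemma \ref{lemma: Induced map rho_2} verbatim for the sphere bundle $\rho_1$, whose Euler class is $w_3(\rho_1)=u_3$: vanishing of $H^1(G_2/SO(4);\IF_2)$ forces $\rho_1^*\colon H^4(G_2/SO(4);\IF_2)\to H^4(G_2/U(2)_+;\IF_2)$ to be injective, so that $\rho_1^*(u_2)=y$ and $\rho_1^*(u_2^2)=y^2$. The only real subtlety I expect is excluding a nontrivial multiplicative extension of the form $y^3=x$; this is handled cleanly by the filtration calculation above, since $x$ has filtration exactly $4$ in $H^6$ while $y^3$ would have filtration at least $6$.
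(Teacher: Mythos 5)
Your proposal is correct and follows essentially the same route as the paper: it runs the Serre spectral sequence of $\rho_1$ with the transgression $d_3^{0,2}(z)=w_3(\rho_1)=u_3$ and then cites the Gysin sequence for $\rho_1$ exactly as in Lemma \ref{lemma: Induced map rho_2}. The only difference is that you spell out what the paper compresses into ``the Leibniz rule determines the cohomology'': the explicit $E_\infty$-page (one class in each even degree $0$--$10$) and the filtration/degree arguments settling the multiplicative extensions $y^3=0$ and $x^2=0$, which is a welcome level of detail.
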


From now on, since $\rho_1$ and $\rho_2$ have the same topological properties considering $\IF_2$ coefficients, we will not make any distinction between them and we will write $G_2/U(2)_{\pm}$ and $\rho_j$. As shown in \cite{kotschickthung}, the similarity mentioned in corollary \ref{corol: Cohom isomorphic} does not happens if we consider the cohomology rings with integer coefficients. 

\medskip
Consider the action of $\IZ_2$ on $G_{2}/U(2)_{\pm}$ by complex conjugation. Since $\rho_{j}(W) = \rho_{j}(\bar{W})$ for every coassociative subspace $W$, and $j \in \{1,2\}$, then both maps are $\IZ_2$-equivariant, with $\IZ_2$  acting trivial on 
$G_2/SO(4)$. This means that we can ask for the Fadell-Husseini index of such bundles. Before we prove our first main result, we fix the notation for the cohomology of the group $\IZ_2$ as
\[
	H^*(B\IZ_2;\IF_2) = \IF_2[t],
\]
with $\deg(t) = 1$.

\begin{theorem} 
\label{thm: BorelCohom G2/U2 & FH rho1&2}
	The Fadell-Husseini index of $\rho_1$ and $\rho_2$ with respect to the introduced $\IZ_2$ action is given by
	\[
		\Index{ \rho_1 }{G_2/SO(4)}{\IZ_2}{\IF_2} =	\Index{ \rho_2 }{G_2/SO(4)}{\IZ_2}{\IF_2} =	\free{  t^3 + u_2 t + u_3  }.
	\]
	Consequently, the Borel cohomology of $G_{2}/U(2)_{\pm} $ is given by 
	\[ 
		H^*( \CBorel{\IZ_2}{G_2/U(2)_{\pm}} ; \IF_2 ) = 	H^*( B\IZ_2 \times  G_2/SO(4) ; \IF_2 )  \Big/ \free{t^3 + u_2 t + u_3}. 
	\] 
\end{theorem}
\begin{proof}
	Consider the Borel construction of the bundle $\rho_i$, for $i \in\{1,2\}$, 
	\[
		\FiberBundle{S^2}{\CBorel{\IZ_2}{G_2/U(2)_{\pm}}}{\rho_i}{\CBorel{\IZ_2}{G_2/SO(4)},}
	\]	
	where the base space can be written as $ B\IZ_2 \times  G_2/SO(4)$. Since the fundamental group
	\[
		\pi_1( B\IZ_2 \times  G_2/SO(4) ) \cong \pi_1(B\IZ_2) \times \pi_1( G_2/SO(4) ) \cong \pi_1(B\IZ_2) \cong \IZ_2
	\]
	acts trivially on the cohomology of the fiber, then the $E_2$-terms of the associated Serre spectral sequence looks as follows:
	\[ 
		E_2^{p,q} = H^p \big( B\IZ_2 \times  G_2/SO(4); H^q( S^2 ;\IF_2) \big). 
	\] 
	Using proposition \ref{prop: Dold} and lemma \ref{lemma: SW rho2}, we get that the transgression map 
	$d_3^{0,2}$ is given by 
	\[ 
		d_3^{0,2}( z ) = t^3 + u_2 t + u_3, 
	\]
	where $H^2( S^2; \IF_2 ) = \free{z}$. Finally, by the Leibniz rule, $d_3^{0,2}$ determines the complete cohomology of 
	$\CBorel{\IZ_2}{G_2/U(2)_{\pm}}$ and the Fadell-Husseini index of $\rho_1$ and $\rho_2$ with coefficients in $\IF_2$.
\end{proof}

\subsection{The Fadell-Husseini index of the pullback bundle $\zeta_n$}
\label{subsec: FadellHusseini pullback}

Let us consider the sphere bundle associated the tautological bundle $\gamma$ over $\OGr{4}{7}$,
\[
	s\gamma = \big( E(s\gamma), \;  \OGr{4}{7},  \; 
			E(s\gamma) \xrightarrow{s\pi} \OGr{4}{7}, \;  S^3 \big),
\] 
and the map $f = c \circ i$, where $i$ denotes the embedding of $G_2/SO(4)$ on $\OGr{3}{7}$ and 
$c \colon \OGr{3}{7} \to \OGr{4}{7}$ assign to every linear subspace its orthogonal complement.
Then, using the map $f$ and the fiber bundle $\rho_j$ defined in \ref{subsec: G2/U2+ } and \ref{subsec: G2/U2- }, 
we construct the following commutative diagram: 
\[
	\xymatrix{
		\calS_{\gamma}^{1} \ar[r] \ar[d]_{\phi_1}&	E\bigC{f^*( s\gamma )}	\ar[r] \ar[d]&		
				E(S\gamma) \ar[d]^{s\pi}\\
		G_2/U(2)_{\pm} \ar[r]^{\rho_{j}} &		G_2/SO(4) \ar[r]^{f} &									\OGr{4}{7},}
\]   
where the map on the left is the pullback bundle of $s\gamma $ along the map $ f \circ \rho_j$,
\[ 
	\zeta_1 = \big( E(\zeta_1) = \calS_{\gamma}^{1}, \;  G_2/U(2)_{-},  \; 
			\calS_{\gamma}^{1} \xrightarrow{\phi_1} G_2/U(2)_{-}, \;  S^3 \big).
\]
To be more precise, the total space of $\zeta_1$ is given by 
\[
	\calS_{\gamma}^{1}  = \{ (W ; \xi^{\perp} , v) \mid W \in G_2/U(2)_{\pm} ,\; \rho_j(W) = \xi ,\;
	 													  v \in  \xi^{\perp} \text{ and }  \deg(v_1)=1\}.
\]

\medskip
The construction of the bundle $\zeta_1$ can be easily generalized as follows: Consider the the $n$-fold product bundle 
$(s\gamma)^n$, and the pullback along the diagonal map $\Delta_n: \OGr{4}{7} \to \OGr{4}{7}^n$:
\[
	\xymatrix{
	E\bigC{ \Delta_n^*( {s\gamma}^n ) } \ar[r] \ar[d]&	E\bigC{s\gamma}^n \ar[d]^{(s\pi)^n} \\
	\OGr{4}{7}	\ar[r]^{\Delta_n} &		\OGr{4}{7}^n.}
\]   
Applying the last construction to the fiber bundle $ \Delta_n^*( {s\gamma}^n ) $, we get a new bundle 
\[ 
	\zeta_n = \big( E(\zeta_n) = \calS_{\gamma}^{n}, \;  G_2/U(2)_{\pm},  \; 
			\calS_{\gamma}^{n} \xrightarrow{\phi_n} G_2/U(2)_{\pm}, \;  (S^3)^n \big).
\]
where 
\begin{eqnarray*}
	\calS_{\gamma}^{n}  & = & \{ (W ; \xi^{\perp} , v_1,  \ldots , v_n) \mid W \in G_2/U(2)_{\pm} ,\; \rho_j(W) = \xi , \\
	 												  &   	       &  \hspace{2cm} v_1, \ldots , v_n \in  \xi^{\perp} \text{ and }  
	 												  \deg(v_1)= \cdots = \deg(v_n)=1\}.
\end{eqnarray*}

\medskip
To describe the Fadell-Husseini index of $\calS_{\gamma}^{n}$, the first step is to calculate the Stiefel-Whitney classes of 
$\zeta_1$. Notice that since $H^1(G_2/U(2)_{\pm} ;\IF_2) = H^3(G_2/U(2)_{\pm} ;\IF_2)=0$, we are just looking for 
$w_2(\zeta_1)$ and $w_4(\zeta_1)$. Then we have the following result.

\begin{lemma} 
\label{lemma: w4}
	The Stiefel-Whitney classes of the pullback bundle $\zeta_1$ are non-zero only in the dimensions $0,2$ and $4$. Moreover, 
	$w_2( \zeta_1)= y$ and $w_4( \zeta_1 )= y^2$.
\end{lemma}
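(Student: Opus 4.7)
My approach will be naturality of Stiefel--Whitney classes, together with the Whitney sum formula on the Grassmannian and the already-computed actions of $i^{\ast}$ and $\rho_j^{\ast}$ on the relevant generators.

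First, I will observe that $\zeta_1$ is the sphere bundle of the rank-$4$ vector bundle $\rho_j^{\ast} i^{\ast} \gamma^{\perp}$, so that naturality of Stiefel--Whitney classes yields
\[
w(\zeta_1) = \rho_j^{\ast} i^{\ast} w(\gamma^{\perp}).
\]
Over $\OGr{3}{7}$ the decomposition $\gamma \oplus \gamma^{\perp} \cong \underline{\IR^7}$ and the Whitney product formula give $w(\gamma^{\perp}) = w(\gamma)^{-1}$. I then pull back to $G_2/SO(4)$ and use the identification $i^{\ast} w(\gamma) = 1 + u_2 + u_3$ (the degree-$2$ and degree-$3$ generators of $H^{\ast}(G_2/SO(4);\IF_2)$ are, in Borel and Hirzebruch's original computation, precisely the Stiefel--Whitney classes of the tautological associative $3$-plane bundle $i^{\ast}\gamma$). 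Inverting $1 + u_2 + u_3$ in the truncated ring of Proposition \ref{prop: Borel_Hirzebruch G2/SO(4) }, using the relations $u_2^3 = u_3^2$ and $u_2^2 u_3 = 0$, and keeping only degrees $\leq 4$ (the rank of $\gamma^{\perp}$), I obtain
\[
i^{\ast} w(\gamma^{\perp}) = 1 + u_2 + u_3 + u_2^2.
\]

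Second, I pull back via $\rho_j$. Lemma \ref{lemma: Induced map rho_2} together with Corollary \ref{corol: Cohom isomorphic} give $\rho_j^{\ast}(u_2) = y$ and hence $\rho_j^{\ast}(u_2^2) = y^2$. For $u_3$, the Gysin sequence of the $S^2$-bundle $\rho_j$ with $w_3(\rho_j) = u_3$ (Lemma \ref{lemma: SW rho2}) shows that $u_3$ lies in the image of $\smile u_3 \colon H^0(G_2/SO(4);\IF_2) \to H^3(G_2/SO(4);\IF_2)$, so by exactness $u_3 \in \ker \rho_j^{\ast}$ and $\rho_j^{\ast}(u_3) = 0$. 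Assembling, $w(\zeta_1) = 1 + y + 0 + y^2$, which gives the three claimed values together with the vanishing of all remaining Stiefel--Whitney classes.

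I expect the main obstacle to be the identification $i^{\ast} w(\gamma) = 1 + u_2 + u_3$, since it requires matching the abstract cohomology generators of $G_2/SO(4)$ with characteristic classes of a specific geometric bundle; this is a classical but non-trivial input from \cite{borelhirzebruch}. A self-contained alternative would be to compute $w_4(\zeta_1)$ directly as the transgression differential $d_4^{0,3}(z)$ in the Serre spectral sequence of the sphere bundle $\zeta_1$ via Dold's Theorem \ref{thm: Dold}, in exact analogy with the computation of $d_3^{0,2}(z) = t^3 + u_2 t + u_3$ in the proof of Theorem \ref{thm: BorelCohom G2/U2 & FH rho1&2}; the vanishing of $w_1(\zeta_1)$ and $w_3(\zeta_1)$ would then follow automatically from $H^1(G_2/U(2)_{\pm};\IF_2) = H^3(G_2/U(2)_{\pm};\IF_2) = 0$.
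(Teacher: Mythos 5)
Your argument is correct and follows the same skeleton as the paper's proof (naturality of Stiefel--Whitney classes along $i \circ \rho_j$, then the computation of $\rho_j^{\ast}$ from Lemma \ref{lemma: Induced map rho_2}), but it substitutes a different key input for the identification of $i^{\ast}w(\gamma^{\perp})$. The paper quotes Akbulut--Kalafat: $i^{\ast}$ sends the Euler class of $\gamma^{\perp}$, i.e.\ $w_4(\gamma^{\perp})$ mod $2$, to $u_2^2$, and then uses $w_4(\gamma^{\perp}) = w_2(\gamma^{\perp})^2$ to get $i^{\ast}w_2(\gamma^{\perp}) = u_2$; you instead invert $w(\gamma)$ via the Whitney sum formula and invoke the Borel--Hirzebruch identification $i^{\ast}w(\gamma) = 1 + u_2 + u_3$. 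Both are legitimate classical inputs, and in fact the only piece of your identification that is really needed, $i^{\ast}w_2(\gamma) = u_2$, already follows from the paper's own input, since $(i^{\ast}w_2(\gamma))^2 = i^{\ast}w_4(\gamma^{\perp}) = u_2^2 \neq 0$ forces $i^{\ast}w_2(\gamma) = u_2$; so you could avoid the degree-$3$ part of the Borel--Hirzebruch statement entirely. Your Gysin argument for $\rho_j^{\ast}(u_3) = 0$ is correct but unnecessary: the target $H^3(G_2/U(2)_{\pm};\IF_2)$ vanishes, so the class dies for free, which is exactly why the lemma only asserts values in degrees $0,2,4$. One caution about your proposed ``self-contained alternative'': computing $w_4(\zeta_1)$ as the transgression $d_4^{0,3}(z)$ of the Serre spectral sequence of $\zeta_1$ (or its Borel version via Theorem \ref{thm: Dold}) is not an independent computation, since identifying that transgression requires either already knowing $w_4(\zeta_1)$ or knowing $H^{\ast}(\calS_{\gamma^{\perp}}^{1};\IF_2)$, which the paper derives from this very lemma; as a side remark it does no harm, but it would be circular as a replacement proof.
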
 
\begin{proof}
	Let $p \colon \OGr{4}{7} \to \Gr{4}{7}$ be the map that forgets the orientation of the $4$-plane in $\IR^7$. 
	It is well known that $p$ is a $2$-fold covering map, and that the induced homomorphism in cohomology satisfies that
	\[
		p^*(w_i) = w_i(\tilde{\gamma}) = \tilde{w}_i,
	\] 
	where $\tilde{w}_i$ are Stiefel-Whitney classes of $\OGr{4}{7}$. In this way, $\text{im}(p^*)$ is a 
	subalgebra of $H^*(\OGr{4}{7};\IF_2)$ generated by $\{ \tilde{w}_2, \tilde{w}_3, \tilde{w}_4 \}$, except by $\tilde{w}_1$ which 
	is trivial since $\OGr{4}{7}$ is simply connected. 
	
	\medskip
	By the naturality of the Stiefel-Whitney classes it is enough to describe the image of $\tilde{w}_2$ and $\tilde{w}_4$ 
	along the composition 
	\[ 
		G_2/U(2)_{\pm} \xrightarrow{\rho_j} G_2/SO(4)  \xrightarrow{f} \OGr{4}{7}. 
	\]
	Akbulut \& Kalafat in \cite[Theorem 2.11]{akbulutkalafat} proved that the embedding $f:G_2/SO_4\to \OGr{4}{7}$ maps the 	
	euler class of $\gamma$, which reduces to $\tilde{w}_4$, to the integral generator in dimension $4$, 
	which reduces to $u_2^2$. Notice that, since $\tilde{w}_1 = 0$, $\tilde{w}_4 = {\tilde{w}_2}^2$. 
	In this way, $f^{*}$ maps $\tilde{w}_2$ to $u_2$.
	
	\medskip
	On the other hand, by lemma \ref{lemma: Induced map rho_2}, $\rho_j^*(u_2) = y$ and $\rho_j^*(u_2^2) = y^2$. This means 	
	that 	
	\[
		w_2( \zeta_1) = ( f \circ \rho_j )^* ( \tilde{w}_2 ) = y
	\]
	and 
	\[
		w_4( \zeta_1) = ( f \circ \rho_j )^* ( \tilde{w}_4 ) = y^2,
	\]
	for $j \in \{1,2\}$.
\end{proof}

\medskip 
There is an action of $\IZ_2^{n}$ on $\calS^{n}_{\gamma}$ defined as follows: 
\[
	(\beta_1, \ldots ,\beta_n)  \cdot (W; \xi^{\perp}, v_1, \ldots ,v_n ) 
		= (W ; \xi^{\perp} , (-1)^{\beta_1} v_1, \ldots,  (-1)^{\beta_n} v_n ),
\]
for $(\beta_1, \ldots ,\beta_n) \in \IZ_2^{n}$ and $(W; \xi^{\perp}, v_1, \ldots ,v_n ) \in \calS^{n}_{\gamma}$. 
Notice that the action of $\IZ_2^{n}$ on $G_2/U(2)_{\pm}$ is trivial. Also, with this action, the projection map $\phi_n$ is 
$\IZ_2^{n}$-equivariant and we can ask for its Fadell-Husseini index. We will start then with the case $n=1$.

\medskip
For the cohomology of $(S^3)^{n}$ and $B\IZ_2^{n}$ we fix the following notation:
\begin{eqnarray*}
	H^*\big( (S^3)^{n};\IF_2 \big) &\cong & H^*( S^3 ;\IF_2 )^{ \otimes n } \\
										 			   &\cong & \IF_2[z_1]/\free{z_1^2} \otimes \cdots \otimes \IF_2[z_n]/\free{z_n^2} \\
													   &\cong & \IF_2[z_1, \ldots , z_n]/\free{z_1^2, \ldots , z_n^2},
\end{eqnarray*}
where $\deg(z_1) = \cdots = \deg(z_n) = 1$, and 
\[
	H^*( B\IZ_2^{n} ;\IF_2 ) = \IF_2[ t_1, \ldots , t_{n} ],
\]
where $\deg(t_1) = \ldots = \deg(t_{n}) =1$.

\begin{proposition} 
\label{prop: pullback case1}
	Consider the action of $\IZ_2$ on $\calS_{\gamma}^{1}$ introduced before. Then the Fadell-Husseini index of 
	$\Map{\phi_1}{ \calS_{\gamma}^{1} }{ G_2/U(2)_{\pm} } $ is given by
	\[
		\Index{ \phi_1 }{G_2/U(2)_{\pm}}{\IZ_2}{\IF_2} = \free{ y^2 + y t_1^2 + t_1^4 }.
	\]
	Consequently, the Borel cohomology of $S_{\gamma}^{1}$ is described as follows:
	\[ 	
		H^*(  \CBorel{\IZ_2}{S_{\gamma}^{1}}; \IF_2 ) =  
				H^*( \CBorel{\IZ_2}{ G_2/U(2)_{\pm}}; \IF_2 ) \Big/ \free{y^2 + y t_1^2  + t_1^4}.	
	\]
\end{proposition}
\begin{proof} 
	Consider the Serre spectral sequence associated to the bundle 
	\[
		S^3 \hookrightarrow \CBorel{\IZ_2}{\calS_{\gamma}^{1}} \xrightarrow{ id \times_{\IZ_2} \phi_1 }  
		\CBorel{\IZ_2}{G_2/U(2)_{\pm}}, 
	\]
	with $E_2$-term given by 
	\begin{eqnarray*}
		E_2^{p,q} &\cong & H^p\big( E\IZ_2 \times_{\IZ_2} G_2/U(2)_{\pm}  ; \mathcal{H}^q( S^3 ;\IF_2) \big) \\
						 &\cong & H^p\big( B\IZ_2 \times G_2/U(2)_{\pm}  ; \mathcal{H}^q( S^3 ;\IF_2) \big).
	\end{eqnarray*}
	Since the fundamental group $\pi_1( B\IZ_2 \times G_2/U(2)_{\pm}) \cong \IZ_2 $ acts trivially on the cohomology of the 
	fiber, then the $E_2$-terms simplify and looks as follows:
	\[ 
		E_2^{p,q} = H^p\big( B\IZ_2 \times G_2/U(2)_{\pm} ; \IF_2\big) \otimes  H^q\big( S^3 ;\IF_2) \big).
	\] 	

	\medskip
	Notice that the first differential of the spectral sequence associated to $id \times_{\IZ_2} \phi_1$ appears on the $E_4$-term.
	Then, by proposition \ref{prop: Dold} and lemma \ref{lemma: w4}, it is given by 
	\[
		d_4^{0,3}( \xi ) = y^2 + y t_1^2 + t_1^4. 
	\]
	For an illustration of the $E_4$-term see Figure \ref{Figure: sseq 5}.	 This describes the Fadell-Husseini index of 
	$\Map{\phi_1}{ \calS_{\gamma}^{1} }{ G_2/U(2)_{\pm} }$, and by the Leibniz rule, also the cohomology of 
	$ E\IZ_2 \times_{\IZ_2} S_{\gamma}^{1}$. 
		
	\begin{figure}[h]
		\centering
		\includegraphics[scale=.8]{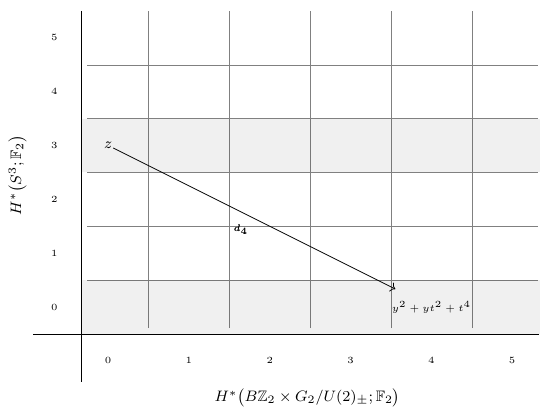}
		\caption{ $E_4^{p,q} = E_2^{p,q} = H^p \big(  B\IZ_2 \times G_2/U(2)_{\pm}  ;H^q( S^3 ;\IF_2) \big)$}
		\label{Figure: sseq 5}
	\end{figure}	
\end{proof}

Using the projection of $\calS_{\gamma}^{n}$ on $\calS_{\gamma}^{1}$, and proposition 
\ref{prop: pullback case1}, we can prove now our second main result.

\begin{theorem} 
\label{thm: FH Pullback}
	Consider the action of $\IZ_2^{n}$ on $\calS_{\gamma}^{n}$  introduced before. Then the
	 Fadell-Husseini index of $\Map{\phi_{n}}{ \calS_{\gamma}^{n} }{ G_2/U(2)_{\pm} } $ is given by
	\[
		\Index{ \phi_{n} }{G_2/U(2)_{\pm}}{\IZ_2^{n}}{\IF_2} 
			= \free{ y^2 + y t_1^2 + t_1^4, \ldots , y^2 + y t_{n}^2 + t_{n}^4 }.
	\]
\end{theorem}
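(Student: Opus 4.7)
The plan is to mimic the strategy used for Proposition \ref{prop: pullback case1} but in the presence of several fiber factors, and then transport the single-factor transgression along each projection $g_i \colon \calS_{\gamma^{\perp}}^{n} \to \calS_{\gamma^{\perp}}^{1}$, $(W;\xi,v_1,\ldots,v_n)\mapsto(W;\xi,v_i)$. First I set up the Serre spectral sequence of the Borel fibration
\[
(S^3)^n \hookrightarrow E\IZ_2^{n+1}\times_{\IZ_2^{n+1}} \calS_{\gamma^{\perp}}^{n} \xrightarrow{id\times_{\IZ_2^{n+1}}\phi_n} E\IZ_2^{n+1}\times_{\IZ_2^{n+1}} G_2/U(2)_{\pm}.
\]
Since only the first $\IZ_2$ summand acts non-trivially on $G_2/U(2)_{\pm}$, the Borel construction of the base splits as $B\IZ_2^{n}\times \bigl(E\IZ_2\times_{\IZ_2}G_2/U(2)_{\pm}\bigr)$, so by Künneth and Theorem \ref{thm: BorelCohom G2/U2 & FH rho1&2} the cohomology of the base is
\[
\IF_2[t_2,\ldots,t_{n+1}]\otimes H^*(B\IZ_2\times G_2/SO(4);\IF_2)\Big/\free{t_1^3+u_2 t_1+u_3}.
\]
The fundamental group of the base acts trivially on the fibre cohomology $H^*((S^3)^n;\IF_2)=\IF_2[z_1,\ldots,z_n]/\free{z_i^2}$, so the $E_2$-page is the tensor product of the above two rings, and no differential can occur before $E_4$.

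Next I determine $d_4^{0,3}(z_i)$ for $i=1,\ldots,n$. For each $i$ the projection $g_i$ is equivariant with respect to the group homomorphism $\iota_i\colon \IZ_2^2\hookrightarrow \IZ_2^{n+1}$ sending $(\beta_0,\beta)$ to the tuple whose $0$-th entry is $\beta_0$, whose $(i+1)$-th entry is $\beta$, and whose other entries are zero. This gives a morphism of Borel fibrations
\[
\xySquare{\CBorel{\IZ_2^2}{\calS_{\gamma^{\perp}}^{1}}}{}{\CBorel{\IZ_2^{n+1}}{\calS_{\gamma^{\perp}}^{n}}}{id\times_{\IZ_2^2}\phi_1}{id\times_{\IZ_2^{n+1}}\phi_n}{\CBorel{\IZ_2^2}{G_2/U(2)_{\pm}}}{}{\CBorel{\IZ_2^{n+1}}{G_2/U(2)_{\pm},}}{15}
\]
hence a morphism of spectral sequences that on the zero column sends the generator $z$ to $z_i$, and on the base sends $t_2\mapsto t_{i+1}$, $t_1\mapsto t_1$, and fixes $u_2,u_3,x,y$. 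By Proposition \ref{prop: pullback case1} the transgression $d_4^{0,3}(z)$ in the $n=1$ spectral sequence equals $y^2+y t_2^2+t_2^4$; commutativity with $g_i^{*}$ forces
\[
d_4^{0,3}(z_i)=y^2+y t_{i+1}^2+t_{i+1}^4, \qquad i=1,\ldots,n.
\]
The Leibniz rule then propagates these transgressions across the whole page and, as in the proof of Theorem \ref{thm: Cohom TotalSpace pullback}, there are no further non-trivial differentials.

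Finally I extract the index. By definition, $\Index{\phi_n}{G_2/U(2)_{\pm}}{\IZ_2^{n+1}}{\IF_2}$ is the kernel of the map induced by $id\times_{\IZ_2^{n+1}}\phi_n$ on Borel cohomology. Every class on the base that survives to $E_\infty$ appears untouched in the edge homomorphism, so the kernel is exactly the ideal generated by the transgression images of a $\IF_2$-basis of $H^3((S^3)^n;\IF_2)$, namely the classes $z_1,\ldots,z_n$. This gives
\[
\Index{\phi_n}{G_2/U(2)_{\pm}}{\IZ_2^{n+1}}{\IF_2}=\free{y^2+y t_2^2+t_2^4,\ldots,y^2+y t_{n+1}^2+t_{n+1}^4}.
\]
The main technical point I expect to be delicate is checking that the ideal produced by the Leibniz propagation coincides with the ideal generated by the $n$ transgressions of degree $4$ — in other words, that no higher-degree generator of the kernel appears independently. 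This follows because the $z_i$ generate $H^*((S^3)^n;\IF_2)$ as an $\IF_2$-algebra and transgression is a derivation on surviving classes, but verifying this cleanly (rather than just stating it) is the step I would write out most carefully.
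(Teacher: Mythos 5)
Your proposal is correct and follows essentially the same route as the paper: transport the $n=1$ transgression of Proposition \ref{prop: pullback case1} to the $\IZ_2^{n+1}$-equivariant spectral sequence of $\phi_n$ via the coordinate projections and naturality, obtain $d_4^{0,3}(z_i)=y^2+yt_{i+1}^2+t_{i+1}^4$, and read off the index from the Leibniz rule (the paper is equally brief on the final point that no further kernel elements arise). The one slip is the direction of your comparison square: $g_i$ is equivariant with respect to the \emph{projection} $\IZ_2^{n+1}\to\IZ_2^2$ onto the first and the factor acting on $v_i$, not the inclusion $\iota_i$, so the induced map of Borel constructions goes from $\CBorel{\IZ_2^{n+1}}{\calS_{\gamma^{\perp}}^{n}}$ to $\CBorel{\IZ_2^{2}}{\calS_{\gamma^{\perp}}^{1}}$ (the paper sidesteps this by factoring into a group-change square for $\phi_1$ followed by the projection $p_k$); the cohomological maps you then use ($z\mapsto z_i$, $t_2\mapsto t_{i+1}$) are exactly those of the correctly oriented morphism, so the computation stands.
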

\begin{proof}
	Let $\Map{i_k}{\IZ_2}{\IZ_2^{n}}$ be the inclusion into the $k$th summand, with $1 \leq k \leq n$. 
	The map $i_k$ induces a morphism  between Borel constructions 
	\[
		\xymatrix@C=15mm{
		\CBorel{\IZ_2^{n}}{ \calS_{\gamma}^{1} }  \ar[d]_{id \times_{\IZ_2^{n}} \phi_1 } & 
			\CBorel{\IZ_2}{ \calS_{\gamma}^{1} }  \ar[l]^{ } \ar[d]^{ id \times_{\IZ_2} \phi_1 } \\
		B{\IZ_2}^n \times G_2/U(2)_{\pm} & B\IZ_2 \times G_2/U(2)_{\pm}  \ar[l]^{ }
		}
	\]	
	where $\IZ_2^{n}$ acts on $\calS_{\gamma}^{1} $ as follows: The $k$th summand acts antipodally on the unitary element. 
	We will refer to this action as the one induced by $i_k$. This morphism of Borel constructions induces a morphism between 
	the corresponding Serre spectral sequences, which on the zero column of the $E_2$-term is an isomorphism. 
	Then, by proposition \ref{prop: pullback case1} and the commutativity of the differentials with the morphism between spectral 
	sequences, the transgression map $d_4^{0,3}$ of the spectral sequence associated to 
	$id \times_{\IZ_2^{n}} \phi_1$ is given by
	\[ 
		d_4^{0,3}( z ) =  y^2 + y t_{k}^2 + t_{k}^4,  
	\]
	for every $1 \leq k \leq n$.
	
	\medskip
	Consider now the projection $\Map{p_{k}}{\calS_{\gamma}^{n}}{ \calS_{\gamma}^{1} }$ given by 
	\[
		p_{k}( W; \xi^{\perp}, v_1, \ldots v_n ) = ( W; \xi^{\perp}, v_{k}),
	\]
	for $1 \leq k \leq n$. With respect to the $\IZ_2^{n}$-action on $\calS_{\gamma}^{1}$ induced by $i_k$, the map $p_{k}$ 
	induces a morphism of Borel constructions
	\[
		\xymatrix@C=15mm{
		\CBorel{\IZ_2^{n}}{ \calS_{\gamma}^{n} } \ar[r]^{ } 
			\ar[d]_{ id \times_{\IZ_2^{n}} \phi_n } & \CBorel{\IZ_2^{n}}{ \calS_{\gamma}^{1} } 
			\ar[d]^{ id \times_{\IZ_2^{n}} \phi_1 } \\
		B{\IZ_2}^n \times G_2/U(2)_{\pm} \ar[r]^{ } & B{\IZ_2}^n \times G_2/U(2)_{\pm}. 
		}
	\]
	This morphism of Borel constructions, in turn, induces a morphism between the corresponding Serre spectral sequences, 
	which on the zero column of the $E_2$-term is a monomorphism. For an illustration of the morphism between the 
	corresponding Serre spectral sequences see Figure \ref{Figure: sseq 6}. Then, by the commutativity of the differentials, the 
	map $d_4^{0,3}$ of the spectral sequences associated to $id \times_{\IZ_2^{n}} \phi_n$ is given by 
	 \[ 
	 	d_4^{0,3}( z_{k} ) = d_4^{0,3}( p_{k}^*(z) ) =\id ( d_4^{0,3}( z ) ) = y^2 + y t_{k}^2 + t_{k}^4, 
	 \]
	 for every generator $z_{k}$ in $H^3( (S^3)^n;\IF_2 )$, with $1 \leq k \leq n$. Then, by the Leibnitz rule, this describes 
	 the Fadell-Husseini index of $\Map{\phi_n}{ \calS_{\gamma^{\perp}}^{n} }{ G_2/U(2)_{\pm} } $.
\end{proof}

\begin{figure}[H] 
		\centering
		\includegraphics[scale=.65]{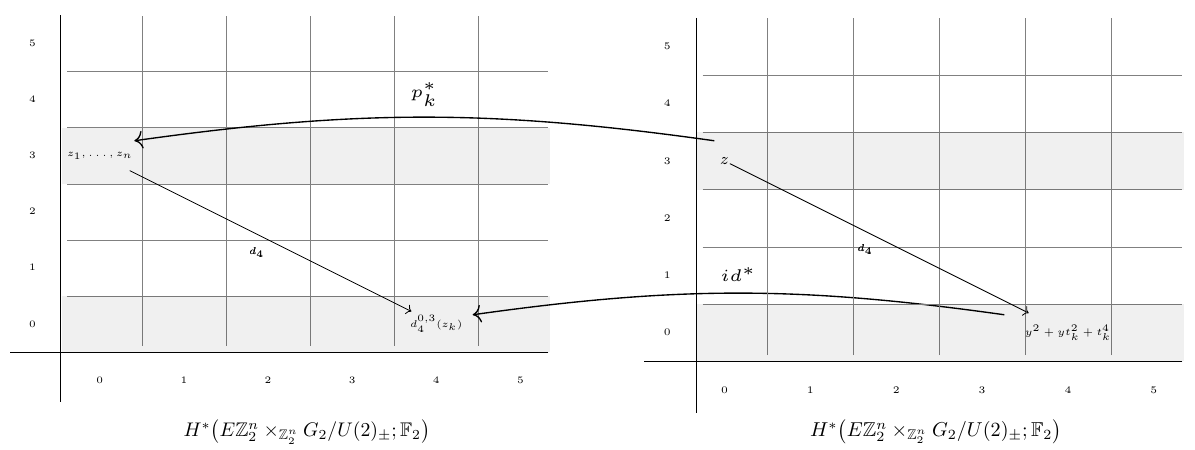}
		\caption{Morphism of spectral sequences induced by $p_k$.}
		\label{Figure: sseq 6}
\end{figure}


\section{ \Large Application: Associative subspaces in Discrete Geometry.}
\label{sec: Application}

Ones we studied the associative and coassociative subspaces of $\text{Im}(\mathbb{O})$, as well as the Fadell-Husseini index of the fibrations defined in the previous sections, in this section we will show how this kind of subspaces appears within a classical problem in discrete geometry known as \emph{ the Gr\"unbaum mass partition problem} \cite{Grunbaum-1960}.

\medskip
From now on, a mass is assumed to be a finite Borel measure on a Euclidean space which vanishes on each affine hyperplane.
The general problem considers a mass $\mu$ in $\IR^n$, and looks for a collection of oriented affine hyperplanes $H_1, H_2, \ldots , H_n$ in $\IR^n$, also called $n$-arrangement, such that each of the $2^n$ resulting orthants contains the same amount of the mass $\mu$, i.e., 
\[
	\mu(H_1^{\sigma_1} \cap \cdots \cap H_n^{\sigma_n}) = \frac{1}{2^n} \mu(\IR^n)
\]
for every $(\sigma_1, \ldots , \sigma_n) \in \IZ_2^n$, where $H_i^{\sigma_i}$ denotes the appropiate closed halfspace deterinated by $H_i$. In that case we said that the collection of hiperplanes equiparts the mass $\mu$.

\medskip
While the case $n=2$ is the well-known ham sandwich theorem, the positive answer for the case $n=3$ was given by Hadwiger in \cite{Hadwiger-1966}. On the other hand, in \cite{Avis-1984}, Avis showed that in every dimension $n \geq 5$ there is a mass which cannot be equiparted by an $n$-arragement. The problem becomes more complicated in the case $n=4$, which, to this very day, is still open. The best approach to the solution was given by Dimitrijevi{\'c} Blagojevi{\'c} in \cite{blagojevic2009mass}, where she considers an almost equiparticion. To be more precise, given a $4$-arrangement  $\{H_1, H_2, H_3, H_4\}$ in 
$\mathbb{R}^4$, and consequently sixteen orthants $\textbf{O}_{\sigma_1 \sigma_2 \sigma_3 \sigma_4} = 
H_1^{\sigma_1} \cap H_2^{\sigma_2} \cap H_3^{\sigma_3} \cap H_4^{\sigma_4}$, an almost equipartition means that
\begin{eqnarray*}
	\mu( \textbf{O}_{0000} ) &=&  \mu( \textbf{O}_{0010} ) =  \mu( \textbf{O}_{0101} ) =  \mu( \textbf{O}_{0111} ) \\
											   &=& \mu( \textbf{O}_{1000} ) = \mu( \textbf{O}_{1010} ) =  \mu( \textbf{O}_{1101} )
													=  \mu( \textbf{O}_{1111} )
\end{eqnarray*}
\begin{eqnarray*}
	\mu( \textbf{O}_{0001} ) &=&  \mu( \textbf{O}_{0011} ) =  \mu( \textbf{O}_{0100} ) =  \mu( \textbf{O}_{0110} ) \\
											   &=& \mu( \textbf{O}_{1001} ) = \mu( \textbf{O}_{1011} ) =  \mu( \textbf{O}_{1100} )
													=  \mu( \textbf{O}_{1110} ).
\end{eqnarray*}

For more details on the history and discuccion of solution methods regarding   Gr\"unbaum's mass partition problem consult \cite{Blagojevic-Frick-Haase-Ziegler-2018}, and for the currently best known results see \cite{Blagojevic-Frick-Haase-Ziegler-2016}.
 
\medskip
In this section, motivated by the computations presented by Blagojevi{\'c}, Calles Loperena, Crabb \& Dimitrijevi{\'c} Blagojevi{\'c} in \cite{BlagojevicCalles2023}, we will show that the solutions provided by Dimitrijevi{\'c} Blagojevi{\'c} in \cite{blagojevic2009mass} can be consider, in a sense, as the orthogonal complement of an associative subspace in $\text{Im}(\mathbb{O})$. More precisely, we prove the following result

\begin{theorem}
\label{thm:  Application}
	Let $\mu$ be a mass assignment on the Grassmann manifold $\OGr{4}{7}$. Then there exist a vector subspace 
	$L \in \OGr{4}{7}$, which is the orthogonal complement of an associative subspace in $\text{Im}(\mathbb{O})$, and a 
	$4$-arrangement 	$(H_1, H_2, H_3, H_4)$ in $L$ which almost equipart the mass $\mu^L$.
\end{theorem}

Here we define a mass assignment on a Grassmann manifold as a cross section of the fibre bundle 
\begin{equation}  \label{eq: mass_assignment}	
	\xymatrix{
		M_+'(\IR^{\ell})\ar[r] & \mathcal{M}_+'(\ell,d)\ar[r]^-{\rho} & \OGr{\ell}{d},
	}
\end{equation}
where $M_+'(\IR^{\ell})$ denotes the subspace of all masses on $\IR^{\ell}$, the total space is given by
\[
	\mathcal{M}_+'(\ell,d):= \{ (L, \nu) \mid L \in \OGr{\ell}{d} \; \text{and} \; \nu \in M_+'(L) \},
\]
and the map $\rho$ is give by $( L, \nu )\longmapsto L$. This means that $\mu$ assigns to each subspace $L$ a mass $\mu^L$ in $L$. For more details about mass assignments see \cite{Schnider2020}, \cite{axelrod2022bisections} and \cite{BlagojevicCalles2023}.

\subsection{ Proof (theorem \ref{thm:  Application}) }
\label{subsec: proof}

The proof is based on the \emph{configuration space test map scheme (CS/TM - scheme)}, developed in numerous paper and formalized by \v{Z}ivaljevi\'{c} in \cite{Zivaljevic-1996, vzivaljevic1998}.
The main idea is to reduce the problem to the question about the non-existence of a particular equivariant map. 
We will consider a map $f \colon \mathcal{C} \to Z$, where $\mathcal{C}$ is the space of all candidates to be a solution of our 
problem, also called 	\emph{the configuration space}, $Z$ is called \emph{the test space}, $f$ is called \emph{the test map}, 
and there is a subspace $V \subseteq Z$ such that $x \in \mathcal{C}$ is a solution of the problem if and only if $f(x) \in V$. 
I addition there is a group $G$, of symmetries of the problem, which acts on $\mathcal{C}$ and $Z$, keeping the subspace 
$V$ $G$-invariant. Moreover, the test map $f$ is equivariant.  In this way, if we prove that 
$f \colon \mathcal{C} \to Z \setminus V$ does not existe, we guarantee that there is a solution of our problem. 

\medskip
In order to apply topological methods and derive an appropiate configuration test map scheme for our problem we make an 
additional assumption on the mass assignment. We assume that for every linear subspace $L \in \OGr{4}{7}$ the associated mass 
$\mu^{L}$ has compact and connected support. This means that for every direction in $L$ there exists a unique oriented affine 
hyperplane orthogonal to the direction equiparting $\mu^{L}$.
	
\subsubsection{Configuration space}
\label{subsubsection: ConfigSpace}
To construct the equivariant map of the test map scheme, 
let us start considering a coassociative subspace $W \in G_2/U(2)_{\pm}$ such that $\rho_i(W) = \xi$, with $\xi \in G_2/SO(4)$ an associative subspace of $\text{Im}(\mathbb{O})$.
Since $W$ corresponds to an orthogonal almost complex structure $J$ of $\xi^{\perp}$, given two unit vectors 
$v_1, v_2 \in \xi^{\perp}$, we obtain a collection $\{v_1, J(v_1), v_2, J(v_2)\}$, which by the extra assumption over the mass 
assignment $\mu$, induces a $4$-arrangement of oriented affine hyperplanes $\{ H_{v_1}, H_{J(v_1)}, H_{v_2}, H_{J(v_2)} \}$ 
equiparting the mass $\mu^{\xi^{\perp}}$ in $\xi^{\perp}$. In that way, $\calS_{\gamma}^{2}$, the total space of the pullback bundle $\phi_2$ defined in section \ref{subsec: FadellHusseini pullback}, represent the space of all cantidates to be a solutions of our problem.

\subsubsection{Test map}
\label{subsubsection: TestMap}
Consider now the bundle map 
$f \colon \calS_{\gamma}^{2} \to G_2/U(2)_{\pm} \times \mathbb{R}^{16}$, between $\phi_{2}$ and the projection into the 
first 	factor $\pi$, given by 
\begin{align*}
	(W ; \xi^{\perp}, v_1, v_2)	\longmapsto
	  & 
	\Big(W ;
		\big(\mu^{\xi^{\perp}}(
			H_{v_1}^{\sigma_1} \cap H_{J(v_1)}^{\sigma_2} \cap H_{v_2}^{\sigma_3} \cap H_{J(v_2)}^{\sigma_4}) 
				- \tfrac{1}{16}\mu^{\xi^{\perp}}(\xi^{\perp}) \big)_{(\sigma_1,\sigma_2,\sigma_3,\sigma_4)\in \IZ_2^4}  
	\Big),
\end{align*}
where $\mathbb{R}^{16}$ is indexed by the elements of the group $\IZ_2^4$. Notice that, by the extra assumption over the 
mass assignment $\mu$, the codomain of the map $f$ is a subspace $\mathcal{U}$ of $\mathbb{R}^{16}$ defined by the 
following equalities: 
\begin{eqnarray*}
	\sum x_{0 \sigma_1 \sigma_2 \sigma_3} &=& \sum x_{1 \sigma_1 \sigma_2 \sigma_3};
		\hspace{1cm} \sum x_{\sigma_1 0 \sigma_2 \sigma_3} = \sum x_{\sigma_1 1 \sigma_2 \sigma_3} \\
	\sum x_{\sigma_1 \sigma_2 0 \sigma_3} &=& \sum x_{\sigma_1 \sigma_2 1 \sigma_3};
		\hspace{1cm}  \sum x_{\sigma_1 \sigma_2 \sigma_3 0} = \sum x_{\sigma_1 \sigma_2 \sigma_3 1} \\
	\sum x_{\sigma_1 \sigma_2 \sigma_3 \sigma_4} &=& 0, 
\end{eqnarray*}
for $(\sigma_1, \sigma_2, \sigma_3) \in \IZ_2^3$ and $(\sigma_1, \sigma_2, \sigma_3, \sigma_4) \in \IZ_2^4$. 

\medskip
As shown in section \ref{subsec: FadellHusseini pullback}, the group $\IZ_2^2$ acts antipodally on the unit vectors of 
$\calS_{\gamma}^{2}$. 
There is also an action of $\IZ_2^2$ on $\mathbb{R}^{16}$ given by permuting the elements in the following way: 
\[
(\beta_1, \beta_2) \cdot x_{\sigma_1 \sigma_2 \sigma_3 \sigma_4} = 
	x_{\beta_1 +\sigma_1 \; \beta_1 + \sigma_2 \; \beta_2 +  \sigma_3 \; \beta_2 +\sigma_4},
\]
where $(\beta_1, \beta_2) \in \IZ_2^2$ and $x_{\sigma_1 \sigma_2 \sigma_3 \sigma_4} \in \mathbb{R}^{16}$. All this makes 
the bundle map $f$ $\IZ_2^2$-equivariant over the codomain $G_2/U(2)_{\pm} \times \mathcal{U}$.

\subsubsection{Test space} 
\label{subsubsection: TestSpace}

Let us consider the $\IZ_2^2$-invariant subspace  $L$ of $\mathbb{R}^{16}$ generated by the equalities
\begin{eqnarray*}
	x_{0000} =  x_{0010} = x_{0100} =  x_{0110} = x_{1001} =  x_{1011} = x_{1101} =  x_{1111}, \\
	x_{0001} =  x_{0011} = x_{0101} =  x_{0111} = x_{1000} =  x_{1010} = x_{1100} =  x_{1110}.
\end{eqnarray*}
Notice that the previous equations determine the condition of ``almost equipartition'' of our $4$-arrangement.
In this way, the test space of our problem is the subspace $\mathcal{U} \cap L$ of codimension $10$ inside $\mathcal{U}$.

\medskip
All of this leads us to the following result, which relates our mass partition problem with a problem about the existence of an equivariant map (the main idea of the CS/TM scheme).

\begin{proposition}
	If there is no $\IZ_2^2$-equivariant map 
	\[
		\calS_{\gamma}^{2} \rightarrow G_2/U(2)_{\pm} \times (\mathbb{U} \setminus L), 
	\]
	with the already defined actions, then our problem stated in Theorem \ref{thm:  Application} has a solution.
\end{proposition}

To make the upcoming computations more manageable, we consider a $\IZ_2^2$ deformation retraction between 
$\mathbb{U} \setminus L$ and $S( (\mathbb{U} \cap L)^{\perp} )$, where $S( (\mathbb{U} \cap L)^{\perp} )$ is a sphere of dimension $9$.

\subsubsection{The non-existance of the equivariant map.} 
\label{subsubsection: Indexes}
For the previously mentioned reasons, the problem reduces to  the non-existence of the bundle map 
\[
	f \colon \calS_{\gamma}^{2} \rightarrow G_2/U(2)_{\pm} \times S( (\mathbb{U} \cap L)^{\perp} )
\]
between $\phi_{2}$ and the corresponding sphere bundle $S\pi$. If we suppose that such map exists, by the monotonicty 
of the index 
\[
	\Index{ \phi_{2} }{G_2/U(2)_{\pm}}{\IZ_2^{2}}{\IF_2} \supseteq \Index{ S\pi}{G_2/U(2)_{\pm}}{\IZ_2^{2}}{\IF_2},
\]
which by theorem \ref{thm: FH Pullback}, and the computations made in \cite[Section 3.2]{blagojevic2009mass}, as well as \cite[Proposition 3.1]{fadellhusseini}, simplifies to
\[
	\free{ y^2 + y t_1^2 + t_1^4,  y^2 + y t_{2}^2 + t_{2}^4 }
		\supseteq 
	H^*( G_2/U(2)_{\pm} ;\IF_2 ) \otimes \free{ t_1^{5}t_2^{2} + t_1^{4}t_2^{3} + t_1^{3}t_2^{4} + t_1^{2}t_2^{5} }.
\]
For a more details about the index of a projection see \cite[Section 3.2]{BlagojevicCalles2023}.
Notice that the index of the sphere representation differs from the original result because of the number of masses we are considering, as well as the action of $\IZ_2^2$ on the codomain. 
However, removing the additional information, and assuming that $t_1 = t_2$ and $t_3 = t_4$ on the generator of 
$H^*(B\IZ_2^4;\IF_2)$ in \cite[Section 3.2]{blagojevic2009mass}, we get with the desired result. 

\medskip
Finally, using a computational package like \emph{SINGULAR} or \emph{SAGE} we confirm that
\[
	t_1^{5}t_2^{2} + t_1^{4}t_2^{3} + t_1^{3}t_2^{4} + t_1^{2}t_2^{5}
	\notin
	\free{ y^2 + y t_1^2 + t_1^4,  y^2 + y t_{2}^2 + t_{2}^4 },
\]
which means that the $\IZ_2^2$-equivariant bundle map $f$ does not exist. As a consequence, there exist an associative subspace $\xi$, and a $4$-arrangement $(H_1, H_2, H_3, H_4)$ in $\xi^{\perp}$ which almost equipart the mass 
$\mu^{\xi^{\perp}}$ in $\xi^{\perp}$.


\bibliography{Ref.bib}{}
\bibliographystyle{amsplain}

\end{document}